\newtheorem{theorem}{Theorem}[section]
\newtheorem{proposition}{Proposition}[section]
\newtheorem{remark}{Remark}[section]
\newtheorem{lemma}{Lemma}[section]
\newtheorem{corollary}{Corollary}[section]
\newtheorem{definition}{Definition}[section]
\def\SU{{\bf SU}}
\def\SL{{\bf SL}}
\def\a{\mathfrak{a}}
\def\un{\mathbbm{1}}
\def\F{{\bf F}}
\def\R{{\bf R}}
\def\C{{\bf C}}
\def\H{{\bf H}}
\def\dual{\mathop{\mathcal{A}^*}}
\DeclareMathOperator{\tr}{tr}
\DeclareMathOperator{\diag}{diag}
\begin{document}

\begin{frontmatter}

\title{A Laplace-type representation of the generalized spherical functions associated to the root systems of type $A$}


\author{Patrice Sawyer}

\address{Department of Mathematics \& Computer Science, Laurentian University, Sudbury, Ontario, Canada K1K 4W5}

\begin{abstract}
In this paper, we extend the iterative expression for the generalized spherical functions associated to the root systems of type $A$ previously obtained beyond regular elements.  We also provide 
the corresponding expression in the flat case.  From there, we derive a Laplace-type representation for the generalized spherical functions associated to the root systems of type $A$ in the Dunkl setting as well as in the trigonometric Dunkl setting.  This representation leads us to describe precisely the support of the generalized Abel transform.  Thanks to a recent result of Rejeb, this allows us to give the support for the Dunkl intertwining operator.

\end{abstract}

\begin{keyword}
generalized spherical function, Dunkl, root system, intertwining operator, Abel transform, dual of the Abel transform
\end{keyword}

\end{frontmatter}

This research is supported by funding from Laurentian University.

The author is thankful to the Institut f\"ur Mathematik at the Universi\"at Paderborn for their hospitality in July 2013 during which this work was started and to Professor Margit R\"osler for helpful conversations.

\section{Introduction}\label{intro}
We start by providing some background.  We refer the reader to \cite{Narayanana,Opdam1,Roesler1} for a more complete exposition on the Dunkl and trigonometric Dunkl settings.  Given a root system $R$ and a Cartan subalgebra $\a$, for every root
$\alpha\in R$, let $r_\alpha(X)=X-2\,\frac{\langle \alpha,X\rangle}{\langle \alpha,\alpha\rangle}\,\alpha$ and let $\partial_\xi$ be the derivative in the direction of $\xi$.  
The Dunkl operators indexed by $\xi\in \a$ are then given by
\begin{align*}
T_\xi&=\partial_\xi+\sum_{\alpha\in R_+}\,k_\alpha\,\alpha(\xi)\,\frac{1}{\langle \alpha,X\rangle}\,(1-r_\alpha)
\end{align*}
The Weyl group $W$ associated to the root system is generated by the reflection maps $r_\alpha$.

In a similar manner, the trigonometric Dunkl operators (also called Dunkl-Cherednik operators or simply Cherednik operators) are given by
\begin{align*}
D_\xi&=\partial_\xi+\sum_{\alpha\in R_+}\,k_\alpha\,\alpha(\xi)\,\frac{1}{1-e^{-\alpha}}\,(1-r_\alpha)-\rho(k)(\xi).
\end{align*}

In the Dunkl setting, the function $G(\lambda,\cdot)$ is defined as the unique analytic solution of 
\begin{align}
T_\xi\,G(\lambda,\cdot) = \langle\xi,\lambda\rangle\,G(\lambda,\cdot)\label{T}
\end{align}
with $G(\lambda,0)=1$.

We can also define the generalized spherical functions as follows: 
$F(\lambda,\cdot)$ is the unique analytic function such that for every symmetric polynomial $p$ (\emph{i.e.} a polynomial which is invariant with respect to the action of the Weyl group), we have
\begin{align}
p(T_\xi)\,F(\lambda,\cdot) = p(\langle\xi,\lambda\rangle)\,F(\lambda,\cdot)\label{F}
\end{align}
with $F(\lambda,0)=1$.  Note that 
\begin{align}
F(\lambda,X)&=\frac{1}{|W|}\,\sum_{w\in W}\,G(\lambda,w\cdot X).\label{FG}
\end{align}

The definitions in (\ref{T}), (\ref{F}) and (\ref{FG}) are essentially the same in the trigonometric setting except that we then replace $T_\xi$ by $D_\xi$ (refer for example to
\cite{Narayanana,Opdam1}).

With some adjustment in the spectral parameter $\lambda$, the functions $F(\lambda,\cdot)$ generalize the spherical functions on symmetric spaces of Euclidean type (in the Dunkl setting) and those on the symmetric spaces of noncompact type (in the trigonometric Dunkl setting).  We refer the reader to Helgason's books \cite{Helgason1, Helgason2} as the standard reference on symmetric spaces.

Indeed, for selected choice of root multiplicities, the root system corresponds to a symmetric space of noncompact type in the trigonometric Dunkl setting and to the corresponding 
flat symmetric space in the Dunkl setting.  We then say that we are in the ``group case'' or ``in the geometric setting''. For example, in the case of the trigonometric setting for the root system of type $A_{n-1}$, $m=1$, 2 or $4$ corresponds to the spaces $\SL(n,\F)/\SU(n,\F)$ with $\F=\R$, $\C$ or $\H$ and $m=\dim_{\R}\,\F$ (in addition, when $n=3$, $m=8$ also give the space $\SL(3,{\bf O})/\SU(3,{\bf O})\simeq \mathbf{E}_{6(-26)}/\mathbf{F}_4$ where ${\bf O}$ denotes the octonions).   In the Dunkl setting, for the same choice of the multiplicity $m$, we have the corresponding flat symmetric spaces.

It is well-know that in the geometric setting, if $H\not=0$ then the spherical functions have a Laplace type representation 
\begin{align}
\phi_\lambda(e^X)=\int_{\a}\,e^{i\,\langle\lambda,H\rangle}\,K(H,X)\,dH
\label{Laplace}
\end{align}
with $K(H,X)>0$ and where the support of $K(\cdot,X)$ is $C(X)$ ($H\not=0$ ensures that $\dim C(X)=\hbox{rank}$ by \cite[Theorem 10.1, Chap.{} IV]{Helgason2}).  

The function $K(H,\cdot)$ is the kernel of the Abel transform
\begin{align*}
\mathcal{A}(f)(H)=\int_{\a}\,f(e^X)\,K(H,X)\,\delta(X)\,dX
\end{align*}
while the dual Abel transform is simply given by
\begin{align*}
\mathcal{A}^*(f)(X)=\int_{\a}\,f(e^H)\,K(H,X)\,dH
\end{align*}
so that $\phi_\lambda=\mathcal{A}^*(e^{i\,\langle\lambda,\cdot\rangle})$.

As for the function $G(\lambda,\cdot)$ in the Dunkl setting, we have the following representation
\begin{align*}
G(\lambda,\cdot)&=V\,e^{\langle\lambda,\cdot\rangle}
\end{align*}
where $V$ is called the Dunkl's intertwining operator.  It is defined by the following properties
\begin{align*}
\left\lbrace
\begin{array}{cc}
T_\xi\,V&=V\,\partial_\xi,\\
V(\mathcal{P}_n)&=\mathcal{P}_n,\\
\left.V\right|_{\mathcal{P}_0}&=\hbox{id}
\end{array}
\right.
\end{align*}
where $\mathcal{P}_n$ is the space of homogeneous polynomials of degree $n$.  We also introduce the positive measure $\mu_x$ such that
\begin{align*}
Vf(X)=\int_{\a}\,f(H)\,d\mu_X(H)
\end{align*}
(for the existence of the positive measure, see for example \cite{Roesler2}).

Compare with the intertwining properties of the generalized Abel transform and its dual
\begin{align*}
p(\partial_\xi)\circ\mathcal{A}=\mathcal{A}\circ p(T_\xi),\\
p(T_\xi)\circ\mathcal{A}^*=\mathcal{A}^*\circ p(\partial_\xi)\\
\end{align*}
for every symmetric polynomial $p$.

From now on, unless otherwise mentioned, we are only concerned with the root systems of type $A$.  The superscript $(m)$ on the various objects  will serve to indicate that the associated multiplicity is equal to $m$ (\emph{e.g.} $\phi^{(m)}_\lambda$, $K^{(m)}(H,X)$, etc.).

In Section \ref{generalized}, we recall the recursive formulae (equation (\ref{Spherical0}) and (\ref{psi})) for the generalized spherical functions $\phi^{(m)}_\lambda$ (denoted $F(\lambda,\cdot)$ above) 
associated with the root system $A_{n-1}$ with root multiplicity $\Re m>0$ in the trigonometric setting.  These formulae were derived for $X\in\a^+$ (see for instance \cite{Sawyer1,Sawyer2}).  
We show first that (\ref{psi}) makes sense for all $X\in\a$.  We then derive Theorem \ref{walls} and Theorem \ref{walls2} to extend (\ref{Spherical0}) and (\ref{psi}) to the cases where $X\in \a$ is not regular. These two results are interesting in themselves. 

In Section \ref{Rep}, we show that in the case of the root system of type $A_{n-1}$ with arbitrary multiplicity $\Re m>0$, equation (\ref{Laplace}) still holds with the kernel
$K^{(m)}(\cdot,X)$ supported in the set $C(X)$ and that when $m>0$, $K^{(m)}(\cdot,X)>0$ on $C(X)^\circ$.  

In Section \ref{Dunkl}, with the help of a theorem of de Jeu \cite{DeJeu}, we extend the results of Section \ref{Rep} to the Dunkl setting.  We also use a result by Rejeb to deduce the exact support of the intertwining transform $V$.

\section{The generalized spherical function associated to the root systems of type $A$}\label{generalized}

We recall some preliminary definitions and results from \cite{Sawyer1, Sawyer2}.  In particular, we describe here the family of differential operators which are instrumental in defining the generalized spherical functions related to the root system $A_{n-1}$.

In what follows, $\a$ is the space of real $n\times n$ diagonal matrices and $\a^+$ is the subset with strictly decreasing diagonal entries.  
For simplicity, we will not 
assume here that the matrices have trace equal to 0 (refer however to Remark \ref{trace}).  
We will use lowercase to write the diagonal entries of an element of $\a$ (e.g. if $X\in \a$ then $X=
\diag[x_1,\dots,x_n]$).  We describe the action of the Weyl group on the elements of $\a$ as follows: if $\sigma\in W=S_n$ then $\sigma\cdot X=\diag[x_{\sigma^{-1}(1)},\dots,x_{\sigma^{-1}(n)}]$.

The differential operators $D^{(m)}_1$, \dots, $D^{(m)}_n$ defined below generate the algebra of differential operators $p(D_\xi)$, where $p$ is any symmetric
polynomial. 

\begin{definition}\label{DIFF}
Let $Y$ be an indeterminate, $\delta=(n-1,n-2,\dots,1,0)$ and let
\begin{align*}
D_n(Y;m) &= \prod_{p<q}\,(e^{2\,x_p}-e^{2\,x_q})^{-1}\,\sum_{\sigma\in S_n}\,\epsilon(\sigma)
\,e^{2\,\sum_{k=1}^n\,\delta(\sigma(k))\,x_k}
\\&\qquad\cdot
\prod_{s=1}^n
\,(Y+\delta(\sigma(s))+\frac{1}{m}\,\frac{\partial~}{\partial x_s})\\
&=\sum_{r=0}^n\,D_r^{(m)}\,Y^{n-r}
\end{align*}
(refer to \cite[page 190]{Macdonald} for more details).  Note in particular that $D_0^{(m)}=1$,
\begin{align*}
D_1^{(m)}
&= \frac{1}{m}\,\overbrace{\sum_{k=1}^n\,\frac{\partial~}{\partial x_k}}^{L_1}+\frac{n\,(n-1)}{2},\\
D_2^{(m)}
&= 
-\frac{1}{2\,m^2}\,\overbrace{\left[\sum_{i=1}^n\,\frac{\partial^2~~}{\partial   x_i^2}
+m\,\sum_{i<j}\,\coth(x_i-x_j)\,\left(\frac{\partial ~}{\partial x_i}-\frac{\partial ~}{\partial x_j}\right)
\right]}^{L_2^{(m)}}\\&
+\frac{1}{2\,m^2}\,L_1^2+\frac{(n-1)^2}{2\,m}\,L_1
+\frac {n\,(n-1)\,(n-2)\,(3\,n-1)}{24}.
\end{align*}
\end{definition}

\begin{definition}\label{defphi}
The generalized spherical function $\phi^{(m)}_\lambda$ for the root system $A_{n-1}$ is the unique analytic solution of the 
system
\begin{align*}
D_n(Y;m)\,\phi^{(m)}_\lambda(e^X)
&=\prod_{k=1}^n\,(Y+(n-1)/2+i\,\lambda_k/m)\,\phi^{(m)}_\lambda(e^X)
\end{align*}
with $\phi^{(m)}_\lambda(e^0)=1$.
\end{definition}

In \cite{Sawyer1, Sawyer2}, we proved the following result for the generalized spherical functions associated to the root system $A_{n-1}$.

\begin{theorem}\label{old}
For $X\in \a^+$, we define $\phi_\lambda^{(m)}(e^X)=e^{i\,\lambda(X)}$ when $n=1$ and, for 
$n\geq 2$,
\begin{align}
\phi^{(m)}_\lambda(e^X)
=\frac{\Gamma(m\,n/2)}{(\Gamma(m/2))^n}
e^{i\,\lambda_n\,\sum_{k=1}^n\,x_k}
\int_{E(X)}\, \phi^{(m)}_{\lambda_0}(e^\xi)\,S^{(m)}(\xi,X)\,d(\xi)^m\,d\xi
\label{Spherical0}
\end{align}
where $E(X)
=\{\xi=(\xi_1,\dots,\xi_{n-1})\colon x_{k+1}\leq \xi_k\leq x_k\}$, $\lambda(X)=\sum_{j=1}^n\,\lambda_j\,x_j$,
$\lambda_0(\xi)=\sum_{i=1}^{n-1}\,(\lambda_i-\lambda_n)\,\xi_i$, $d(X)=\prod_{r<s}\,\sinh(x_r-x_s)$, $d(\xi)=\prod_{r<s}\,\sinh(\xi_r-\xi_s)$ and
\begin{align*}
\lefteqn{S^{(m)}(\xi,X)}\\
&=d(X)^{1-m}\,d(\xi)^{1-m}\,\left[\prod_{r=1}^{n-1}
\,\left(\prod_{s=1}^r\,\sinh(x_s-\xi_r)
\,\prod_{s=r+1}^n\,\sinh(\xi_r-x_s)\right)\right]^{m/2-1}.
\end{align*}

Furthermore, if $\chi_\lambda=\phi_{\lambda-i\,\rho}$ then 
\begin{align}
\chi_\lambda^{(m)}(e^X)
&=\frac{\Gamma(m\,n/2)}{(\Gamma(m/2))^n}
\,e^{i\,\lambda_n\,\sum_{k=1}^n\,x_k}
\,\int_{\sigma_n}\,\chi^{(m)}_{\lambda_0}(e^\xi)
\,(\beta_1\,\cdots\,\beta_n)^{m/2-1}\,d\beta\label{psi}
\end{align}
where $\sigma_n=\{(\beta_1,\dots,\beta_n)\in\R^n\colon~\sum_{k=1}^n\,\beta_k=1\}$ and $\xi$ is defined 
by the relations
\begin{align}
\beta_p&=\frac{\prod_{j=1}^{n-1}\,(e^{2\,\xi_j}-e^{2\,x_p})}{\prod_{j\not=p}\,(e^{2\,x_j}-e^{2\,x_p})},~p=1,\dots,n.
\label{betap}
\end{align}

Then, for arbitrary $m$ with $\Re m>0$, $\phi^{(m)}_\lambda(e^X)$ are the generalized spherical functions associated to the root system $A_{n-1}$ 
as described in Definition \ref{defphi}.
\end{theorem}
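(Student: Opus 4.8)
The plan is to verify that the $\phi^{(m)}_\lambda$ constructed via the iterated integral (\ref{Spherical0}) indeed solves the differential system in Definition \ref{defphi}, by induction on $n$. The base case $n=1$ is immediate since $D_1(Y;m)\,e^{i\lambda x_1}=(Y+i\lambda_1/m)\,e^{i\lambda x_1}$. For the inductive step, the key is that formula (\ref{Spherical0}) is an instance of a general \emph{reduction formula} for the operator $D_n(Y;m)$: if one writes $D_n(Y;m)$ using the determinantal (Macdonald) expression in Definition \ref{DIFF} and applies it to a function of the form $e^{i\lambda_n(x_1+\cdots+x_n)}\int_{E(X)} g(e^\xi)\,S^{(m)}(\xi,X)\,d(\xi)^m\,d\xi$, one should be able to pull $D_n(Y;m)$ through the integral sign and, after integration by parts in the $\xi$ variables, convert it into $D_{n-1}(Y';m)$ acting on $g$, where $Y'$ is shifted appropriately. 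This is precisely the computation behind the recursion in \cite{Sawyer1,Sawyer2}, so I would cite it rather than redo it.

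First I would recall from \cite{Sawyer1,Sawyer2} the explicit "descent" identity expressing $D_n(Y;m)$ applied to the integral transform $f\mapsto \int_{E(X)} f(e^\xi)S^{(m)}(\xi,X)d(\xi)^m d\xi$ in terms of the lower-rank operators $D_r^{(m)}$ acting inside. The crucial algebraic input is the Macdonald-type kernel identity linking $S^{(m)}(\xi,X)$, the Vandermonde-type factors $\prod_{p<q}(e^{2x_p}-e^{2x_q})$, and the differential operators $Y+\delta(\sigma(s))+\frac1m\partial_{x_s}$: applying the $x_s$-derivatives to $S^{(m)}$ and integrating by parts produces boundary terms on $\partial E(X)$ (where consecutive $\xi_k$ collide with an $x_j$), and these boundary terms are exactly what reconstitutes the rank-$(n-1)$ eigenvalue product $\prod_{k=1}^{n}(Y+(n-1)/2+i\lambda_k/m)$ once the inductive hypothesis $D_{n-1}(Y';m)\phi^{(m)}_{\lambda_0}(e^\xi)=\prod_{k=1}^{n-1}(Y'+(n-2)/2+i(\lambda_k-\lambda_n)/m)\phi^{(m)}_{\lambda_0}(e^\xi)$ is inserted. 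The factor $e^{i\lambda_n\sum x_k}$ accounts for the shift between the full spectral parameter and the "reduced" one $\lambda_0$, and the normalizing constant $\Gamma(mn/2)/(\Gamma(m/2))^n$ is pinned down by requiring $\phi^{(m)}_\lambda(e^0)=1$, using that $\int_{\sigma_n}(\beta_1\cdots\beta_n)^{m/2-1}d\beta=(\Gamma(m/2))^n/\Gamma(mn/2)$ (Dirichlet integral) together with the change of variables (\ref{betap}), which also establishes the equivalence of (\ref{Spherical0}) and (\ref{psi}).

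The analyticity and uniqueness claims are handled separately and more softly: uniqueness of the analytic solution of the holonomic system in Definition \ref{defphi} with the given initial condition is a standard fact about the hypergeometric system of Heckman--Opdam type for root system $A_{n-1}$ (the system has a one-dimensional space of solutions regular at the origin), so it suffices to know the constructed object is analytic near $0$ and $W$-invariant. Analyticity follows because, after the change of variables (\ref{betap}), the integrand in (\ref{psi}) extends holomorphically in $X$ in a neighborhood of the identity — the apparent singularities of $S^{(m)}$ along the walls are integrable and cancel, which is exactly the content that the later Theorem \ref{walls} and Theorem \ref{walls2} make precise; here I would simply invoke that the right-hand side of (\ref{psi}) is visibly analytic since the $\beta_p$ are analytic functions of $(x_1,\dots,x_n)$ and the simplex $\sigma_n$ is a fixed compact domain of integration. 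Finally, the statement "for arbitrary $m$ with $\Re m>0$" is obtained by analytic continuation in $m$: both sides of Definition \ref{defphi}'s identity depend holomorphically on $m$ in the right half-plane (the integral (\ref{psi}) converges absolutely for $\Re m>0$ and is holomorphic in $m$ there), and they agree for, say, $m$ a positive even integer or on the geometric values, hence for all $\Re m>0$.

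The main obstacle is the bookkeeping in the descent identity: correctly tracking how the $n$ differential factors $Y+\delta(\sigma(s))+\frac1m\partial_{x_s}$, once summed against $\epsilon(\sigma)$ and the exponentials $e^{2\sum\delta(\sigma(k))x_k}$, interact with the kernel $S^{(m)}(\xi,X)$ and its $d(\xi)^m$ weight, and verifying that the integration-by-parts boundary contributions assemble into precisely one extra linear factor $(Y+(n-1)/2+i\lambda_n/m)$ beyond the $n-1$ factors supplied by the inductive hypothesis — with no spurious terms. This is the heart of the matter and is the computation carried out in \cite{Sawyer1,Sawyer2}; in the present paper it is legitimate to quote it, since the new content (Theorems \ref{walls}, \ref{walls2}, and the Laplace representation) builds \emph{on top of} Theorem \ref{old} rather than reproving it from scratch.
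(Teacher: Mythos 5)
The paper itself contains no proof of Theorem \ref{old}: it is imported from \cite{Sawyer1,Sawyer2} (``In \cite{Sawyer1,Sawyer2}, we proved the following result\dots''), so your decision to quote the descent computation rather than redo it is exactly what the paper does, and your sketch of that computation (induction on the rank, pulling $D_n(Y;m)$ through the integral transform, the Dirichlet integral fixing the constant so that $\phi^{(m)}_\lambda(e^0)=1$, and the change of variables (\ref{betap}) giving the equivalence of (\ref{Spherical0}) and (\ref{psi})) is consistent with the cited arguments.

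One caveat: your final step, obtaining the case of arbitrary $\Re m>0$ ``by analytic continuation in $m$'' from agreement at the geometric values $m=1,2,4$ or at positive even integers, does not work as stated. Those values form a discrete subset of the half-plane $\{\Re m>0\}$ with no accumulation point there, so the identity theorem gives nothing (and agreement at general even integers is not something you have established anyway); one would need either a set of parameters with an accumulation point in the half-plane or a Carlson-type theorem with growth bounds you have not verified. The point is moot if you lean fully on \cite{Sawyer1,Sawyer2}, where the verification that the recursive formula satisfies the system of Definition \ref{defphi} is carried out with $m$ as a free parameter with $\Re m>0$, so no continuation in $m$ is required; but as written, that sentence of your argument should either be deleted or repaired along the lines just indicated.
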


We will assume $\Re m>0$ in this paper unless otherwise specified.  The next remark explains why normalizing our matrices to trace equal to 0 is not a concern.

\begin{remark}\label{trace}
It is readily seen that $\phi_{\lambda+a\,\scriptstyle\tr}(e^H)=e^{i\,a\,\tr H}\,\phi_\lambda(e^H)$ for $a\in \C$ and 
$\phi_{\lambda}(e^{H+b\,I})=e^{i\,b\,\sum_{k=1}^n\,\lambda_k}\,\phi_\lambda(e^H)$ for $b\in\R$ using induction.
\end{remark}

Our next step is to extend Theorem \ref{old} to an arbitrary $X\in \a$ using expression (\ref{psi}).  We start with a definition and notation.

\begin{definition}\label{notaplus}
For $X\in\a$, we will write $\pi(X)$ for the unique element in $(W\cdot X)\cap\overline{\a^+}$ (the projection of $X$ 
into $\overline{\a^+}$). We will also write
\begin{align}
\pi(X)=\diag\,[a_1,\dots,a_1,a_2,\dots,a_2,\dots,a_r,\dots,a_r]\label{a}
\end{align}
where $\pi(X)=\sigma\cdot X\in\overline{\a^+}$ for some $\sigma\in W$ 
and where the $a_i$'s are distinct, decreasing and the size of a given block of $a_i$'s is $n_i$.  We will also use 
the notation $N_0=0$, $N_k=n_1+\dots+n_k$ when $1\leq k\leq r$ (observe that $N_r=n$).
\end{definition}

We then need the following auxiliary result.
\begin{lemma}\label{TFSAE}
~
\begin{enumerate}
\item If $X\in \a^+$ then (\ref{betap}) is equivalent to $e^{2\,\xi_i}$, $i=1$, \dots, $n-1$, being the roots of 
\begin{align}
q(x)&=\sum_{p=1}^n\,\beta_p\,\prod_{i\not=p}\,(x-e^{2\,x_i}),~
\hbox{$x$ an indeterminate}\label{poly}
\end{align}
\emph{i.e.} $q(x)=\prod_{i=1}^{n-1}\,(x-e^{2\,\xi_i})$.

\item Let $X\in\a$.  If $\beta\in \sigma_n$ then the roots $u_1\geq u_2\geq \dots\geq u_{n-1}$ of $q(x)$ in 
(\ref{poly}) are strictly positive and if we write $\xi_i=(\log u_i)/2$, $i=1$, \dots, $n-1$, 
then $\xi\in E(\pi(X))$.

\item The map $\chi_\lambda$ can be extended continuously to a Weyl-invariant map over $\a$ by using equation 
(\ref{psi}) where $u_i=e^{2\,\xi_i}$, $i=1$, \dots, $n-1$, are the roots of $q(x)$ in (\ref{poly}).

\end{enumerate}

\end{lemma}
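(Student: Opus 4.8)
The plan is to prove the three parts of Lemma~\ref{TFSAE} in order, since each builds on the previous one. For part (1), I would start from the defining relations~(\ref{betap}) and observe that the polynomial $q(x)$ in~(\ref{poly}) has degree at most $n-1$: the coefficient of $x^{n-1}$ is $\sum_{p=1}^n \beta_p$, which equals $1$ when $\beta\in\sigma_n$, so $q$ is monic of degree exactly $n-1$. Evaluating $q$ at $x=e^{2x_p}$ kills every term of the sum except the $p$-th, giving $q(e^{2x_p})=\beta_p\prod_{i\neq p}(e^{2x_p}-e^{2x_i})$. Comparing with~(\ref{betap}) shows $q(e^{2x_p})=\prod_{j=1}^{n-1}(e^{2\xi_j}-e^{2x_p}) = (-1)^{n-1}\prod_{j=1}^{n-1}(e^{2x_p}-e^{2\xi_j})$. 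Since $q$ is monic of degree $n-1$ with $n-1$ roots $e^{2\xi_j}$ (allowing for the moment that they are determined by these interpolation conditions), a standard Lagrange-interpolation argument on $n$ distinct nodes $e^{2x_1},\dots,e^{2x_n}$ shows that~(\ref{betap}) holds for all $p$ if and only if $q(x)=\prod_{i=1}^{n-1}(x-e^{2\xi_i})$; the point is simply that a polynomial of degree $\le n-1$ is determined by its values at $n$ points, so the two descriptions of $q$ coincide.

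For part (2), I would use the interlacing structure. Since $X\in\a$, after reordering we may as well compute with $\pi(X)$; the values $e^{2x_p}$ are a permutation of the entries of $e^{2\pi(X)}$, and $q$ is symmetric in the nodes, so the roots $u_i$ of $q$ depend only on $\pi(X)$. When the entries of $\pi(X)$ are distinct, write $t_1 > t_2 > \cdots > t_n > 0$ for $e^{2a_1},\dots$; then $q(t_p) = (-1)^{n-1}\prod_{i\ne p}(t_p-t_i)\cdot\beta_p'$ for the corresponding $\beta$, and since $\beta\in\sigma_n$ forces—wait, $\beta$ need not have all positive entries. The cleaner route: $q(x) = \sum_p \beta_p \prod_{i\ne p}(x-t_i)$ and $q(t_p)=\beta_p\prod_{i\ne p}(t_p-t_i)$; the sign of $\prod_{i\ne p}(t_p-t_i)$ alternates with $p$, so $\sgn q(t_p) = (-1)^{p-1}\sgn\beta_p$. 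This does not immediately give a sign change, so instead I would argue by continuity/density: the set of $\beta$ with strictly positive entries is dense in the relevant region, for such $\beta$ the values $q(t_p)$ strictly alternate in sign (here I use $\prod_{i\ne p}(t_p - t_i)$ has sign $(-1)^{p-1}$), giving exactly one root $u_p$ of $q$ in each interval $(t_{p+1},t_p)$, hence $u_i>0$ and, taking logs, $a_{p+1} \le \xi_p \le a_p$, i.e. $\xi\in E(\pi(X))$; then pass to the closure for general $\beta\in\sigma_n$. The degenerate case where $\pi(X)$ has repeated entries is handled by a limiting argument from the regular case, noting that $E(\pi(X))$ is closed and the roots $u_i$ depend continuously on $X$.

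For part (3), I would first note that the right-hand side of~(\ref{psi}) is well-defined for every $X\in\a$: by part (2) the map $X\mapsto\xi$ (via the roots of $q$) is continuous on $\a$, the integrand $\chi^{(m)}_{\lambda_0}(e^\xi)(\beta_1\cdots\beta_n)^{m/2-1}$ is integrable over $\sigma_n$ (the factor $(\beta_1\cdots\beta_n)^{m/2-1}$ is integrable precisely because $\Re m > 0$, and $\chi^{(m)}_{\lambda_0}(e^\xi)$ is bounded on the compact parameter range by Lemma~\ref{TFSAE}(2) together with the fact that $\xi$ stays in the compact set $E(\pi(X))$ and $\chi^{(m)}_{\lambda_0}$ is continuous there—this uses induction on $n$). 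So~(\ref{psi}) defines a function $\tilde\chi_\lambda$ on all of $\a$, continuous by dominated convergence. For $X\in\a^+$ it agrees with $\chi^{(m)}_\lambda(e^X)$ by Theorem~\ref{old}. It remains to check Weyl-invariance: permuting the entries of $X$ permutes the nodes $e^{2x_p}$, which leaves $q$ (hence the multiset $\{u_i\}$, hence $\xi$ up to the irrelevant ordering of its coordinates) unchanged and simultaneously permutes the $\beta_p$ in~(\ref{betap}); since the integrand in~(\ref{psi}) is symmetric in $(\beta_1,\dots,\beta_n)$ and $\chi^{(m)}_{\lambda_0}(e^\xi)$ depends on $\xi$ only through its multiset of coordinates (it is itself Weyl-invariant by induction), the integral is unchanged. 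Thus $\tilde\chi_\lambda$ is a Weyl-invariant continuous extension, and since any such extension must agree with the unique continuous extension of a function given on the dense open set $\bigcup_{w\in W} w\cdot\a^+$, this is \emph{the} extension.

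The main obstacle I expect is part (2): specifically, controlling the signs of $q$ at the nodes when $\beta$ ranges over all of $\sigma_n$ rather than just the positive simplex, and handling the non-regular case where nodes collide. Both are resolved by a density/continuity argument rather than a direct computation, but one must be careful that the roots $u_i$ genuinely vary continuously as nodes merge (they do, since $q$ has coefficients depending continuously—indeed polynomially—on the $e^{2x_p}$ and on $\beta$, and its roots are therefore continuous); the interlacing conclusion $\xi\in E(\pi(X))$ then survives passage to the limit because $E(\pi(X))$ is closed.
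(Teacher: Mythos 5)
Your argument is correct in substance, but it takes a genuinely different route from the paper at the two nontrivial points. For part (1) the paper compares elementary symmetric functions: it rewrites (\ref{betap}) as $e_k(e^{2\xi})=\sum_{p}\beta_p\,e_k(e^{2x_1},\dots,\widehat{e^{2x_p}},\dots,e^{2x_n})$ for $k=0,\dots,n-1$ and matches the coefficients of $x^k$ on both sides of $q(x)=\prod_i(x-e^{2\xi_i})$, whereas you evaluate $q$ at the $n$ distinct nodes $e^{2x_p}$ and invoke uniqueness of interpolation in degree $\leq n-1$; the two arguments are equivalent in content and cost. For part (2) the paper works directly at a possibly non-regular $X$ and possibly degenerate $\beta$: grouping $\gamma_k=\sum_{x_p=a_k}\beta_p$, it factors $q(x)=\prod_j(x-e^{2a_j})^{n_j-1}\prod_{s\notin S}(x-e^{2a_s})\,q_0$-type reduced polynomial with $S=\{s:\gamma_s>0\}$, and obtains the remaining $|S|-1$ roots from the sign alternation of that reduced polynomial at the points $e^{2a_s}$, $s\in S$, so the location of every root is exhibited explicitly (which is what feeds Theorem \ref{walls} later). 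You instead prove the interlacing only for strictly positive $\beta$ and distinct nodes, where the alternation of $q(t_p)$ is immediate, and then reach the general case by continuity of the roots and closedness of $E(\pi(X))$; this is shorter and avoids the case analysis, at the price of losing the explicit description of the degenerate configurations. Your part (3) (integrability of $(\beta_1\cdots\beta_n)^{m/2-1}$ from $\Re m>0$, dominated convergence, symmetry of the integrand under simultaneously permuting the nodes and the $\beta_p$, induction on $n$, and agreement with Theorem \ref{old} on $\a^+$) is a fleshed-out version of the paper's one-line proof and is fine.

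One caveat on part (2): as printed, $\sigma_n$ is only the affine hyperplane $\sum_k\beta_k=1$, and you rightly noticed that negative entries ruin the sign argument; but your repair, density of positive $\beta$ plus passage to the closure, reaches only the nonnegative simplex, not the whole hyperplane (and the statement genuinely fails off the simplex: for $n=2$, $\beta=(2,-1)$ gives the root $2e^{2x_2}-e^{2x_1}$, which need not be positive). This is not a real defect of your proof, because $\sigma_n$ must be read as the probability simplex: the fractional power $(\beta_1\cdots\beta_n)^{m/2-1}$ in (\ref{psi}) requires $\beta_p\geq 0$, and the paper's own factorization step discards the terms with $\gamma_k\leq 0$, hence also presumes nonnegativity. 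With that reading, your density argument covers exactly the intended statement; just state the nonnegativity explicitly rather than leaving ``the relevant region'' vague.
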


\begin{proof}
~
\begin{enumerate}
\item Write $e_k(\mu)=\sum_{i_1<\dots<i_k<n}\,\mu_{i_1}\cdots\,\mu_{i_k}$.
As pointed out in \cite{Sawyer1}, the relations in (\ref{betap}) are equivalent to 
\begin{align*}
e_k(e^{2\,\xi}) &=\sum_{p=1}^n\,\beta_p\,e_k
(e^{2\,x_1},\dots,e^{2\,x_{p-1}},e^{2\,x_{p+1}},\dots,e^{2\,x_n}),~k=0,\dots,n-1,
\end{align*}
which are in turn equivalent to 
$\prod_{i=1}^{n-1}\,(x-e^{2\,\xi_i})=q(x)$
(it suffices to consider the coefficients of $x^k$, $k=1$, \dots, $n-1$ on both sides).

\item Now suppose that 
$(\beta_1,\dots,\beta_n)\in\sigma_n$ is given and consider the polynomial (\ref{poly}) and the notation in 
Definition \ref{notaplus}.  Let $\gamma_k=\sum_{x_p=a_k}\,\beta_p$ and 
note that $S=\{s\colon \gamma_s>0\}\not=\emptyset$ since $\sum_{k=1}^r\,\gamma_k=1$. We have
\begin{align*}
q(x)&=\prod_{j=1}^r\,(x-e^{2\,a_j})^{n_j-1}\,\sum_{k=1}^r\,\gamma_k\,\prod_{j\not=k}\,(x-e^{2\,a_j})\\
&=\prod_{j=1}^r\,(x-e^{2\,a_j})^{n_j-1}
\prod_{s\not\in S}\,(x-e^{2\,a_s})
\,\sum_{s\in S}\,\gamma_s\,\overbrace{\prod_{j\not=s,j\in S}\,(x-e^{2\,a_j})}^{q_0(x)}.
\end{align*}

For $s\in S$, $q_0(e^{2\,a_s})$ alternate signs: there are $|S|-1$ roots.  These roots complement the 
roots $x=e^{2\,a_j}$ with multiplicities $n_j-1$ and the roots $x=e^{2\,a_j}$ with $j\not\in S$.

\item It suffices to reflect that in expression (\ref{psi}), the variable $\xi$ depends continuously on the coefficients $\beta_p$ (up to their order).  Using induction, the rest follows.

\end{enumerate}
\end{proof}

We are looking for simplified expressions corresponding to (\ref{Spherical0}) and (\ref{psi}) which 
are also valid for $X\in \overline{\a^+}$.  The following lemma is the basis for the computations required to prove Theorem \ref{walls}.

\begin{lemma}\label{beta}
We have
\begin{align*}
\lefteqn{\int_{\sigma_n}\,f(\beta_1+\dots+\beta_k,\beta_{k+1},\dots,\beta_n)\,(\beta_1\cdots \beta_n)^{m/2-1}\,d\beta}\\
&=\frac{\Gamma(m/2)^k}{\Gamma(m\,k/2)}\,\int_{\sigma_{n+1-k}}\,f(\gamma_k,\dots,\gamma_n)\,
\gamma_k^{m\,k/2-1}\,(\gamma_{k+1}\cdots\gamma_n)^{m/2-1}
\,d\gamma.
\end{align*}
\end{lemma}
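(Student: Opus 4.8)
The plan is to recognize the left-hand integral as an iterated Dirichlet-type integral and to collapse the first $k$ coordinates by a single change of variables. First I would introduce, on the simplex $\sigma_n = \{\beta \in \R^n : \sum_k \beta_k = 1\}$, the new coordinates $\gamma_k = \beta_1 + \dots + \beta_k$ together with $\gamma_{k+1} = \beta_{k+1}, \dots, \gamma_n = \beta_n$, and on the remaining ``mass'' $\gamma_k$ the barycentric coordinates $t_j = \beta_j/\gamma_k$ for $j = 1,\dots,k$, which live on $\sigma_k$. Since the integrand on the left depends on $\beta$ only through $(\beta_1+\dots+\beta_k, \beta_{k+1},\dots,\beta_n) = (\gamma_k,\gamma_{k+1},\dots,\gamma_n)$, the $t$-variables enter only through the weight $(\beta_1\cdots\beta_k)^{m/2-1}$, which factors as $\gamma_k^{k(m/2-1)}\,(t_1\cdots t_k)^{m/2-1}$. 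The Jacobian of the substitution $\beta_j = \gamma_k t_j$ on the $(k-1)$-dimensional simplex contributes a further $\gamma_k^{k-1}$, so the total power of $\gamma_k$ becomes $k(m/2-1) + (k-1) = mk/2 - 1$, which is exactly the exponent appearing on the right. The remaining integral over $t\in\sigma_k$ of $(t_1\cdots t_k)^{m/2-1}$ is the classical Dirichlet integral, equal to $\Gamma(m/2)^k/\Gamma(mk/2)$, and pulling this constant out yields the claimed identity.

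The cleanest way to organize this rigorously is to avoid fussing with surface Jacobians directly: I would instead parametrize $\sigma_n$ by dropping the last coordinate, i.e. integrate $f(\beta_1+\dots+\beta_k,\beta_{k+1},\dots,\beta_n)(\beta_1\cdots\beta_n)^{m/2-1}$ over the solid region $\{\beta_i \ge 0,\ \sum_{i<n}\beta_i \le 1\}$ with $\beta_n := 1 - \sum_{i<n}\beta_i$, and likewise interpret $\int_{\sigma_{n+1-k}}$ as an integral over $\{\gamma_k,\dots,\gamma_{n-1}\ge 0,\ \gamma_k + \gamma_{k+1}+\dots+\gamma_{n-1}\le 1\}$ with $\gamma_n := 1 - \gamma_k - \dots - \gamma_{n-1}$. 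Then the substitution $\beta_j = \gamma_k\, t_j$ for $j=1,\dots,k-1$ (with $t_k = 1 - t_1 - \dots - t_{k-1}$ and $\gamma_k$ kept together with $\gamma_{k+1},\dots,\gamma_{n-1}$ as untouched variables) is an honest change of variables on an open subset of Euclidean space, whose Jacobian determinant is $\gamma_k^{k-1}$. Carrying out this substitution and then invoking Fubini to separate the $t$-integral from the $(\gamma_k,\dots,\gamma_{n-1})$-integral delivers the result; the Dirichlet integral $\int (t_1\cdots t_k)^{m/2-1}\,dt = \Gamma(m/2)^k/\Gamma(mk/2)$ over the standard $(k-1)$-simplex is standard and can be quoted or proved by a one-line induction on $k$ using the Beta integral.

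I expect the only genuine subtlety to be bookkeeping: keeping straight which variables are ``active'' (the $t_j$, rescaled out) versus ``passive'' (the block-sums $\gamma_k,\gamma_{k+1},\dots$ carried along unchanged), and correctly accounting for the single deleted coordinate on each side so that the power of $\gamma_k$ tallies to $mk/2 - 1$ rather than, say, $mk/2$ or $mk/2 - 2$. There is also a mild convergence point when $\Re m$ is small, but since $\Re m > 0$ the exponent $m/2 - 1$ has real part $> -1$, so all the integrands are locally integrable and Fubini applies without difficulty. No analytic input beyond the Beta/Dirichlet integral and Fubini's theorem is needed; the identity is, at bottom, the statement that the pushforward of the Dirichlet measure on $\sigma_n$ under the map summing the first $k$ coordinates is again a Dirichlet-type measure, with the first parameter aggregated.
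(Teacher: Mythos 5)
Your proof is correct and follows essentially the same route as the paper: the substitution $\gamma_k=\beta_1+\dots+\beta_k$, $t_j=\beta_j/\gamma_k$, integrating out the rescaled variables via the Dirichlet integral $\Gamma(m/2)^k/\Gamma(mk/2)$, with the Jacobian factor $\gamma_k^{k-1}$ accounting for the exponent $mk/2-1$. You simply supply the bookkeeping that the paper leaves implicit.
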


\begin{proof}
It suffices to use the change of variables 
$\gamma_k=\beta_1+\dots+\beta_k$, $\tilde{\beta}_i=\beta_i/\gamma_k$, $i=1$, \dots, $k$ and 
$\gamma_j=\beta_j$, $j=k+1$, \dots, $n$ and to ``integrate out'' $\tilde{\beta}_1$, \dots, $\tilde{\beta}_k$
noting that $\sum_{i=1}^k\,\tilde{\beta}_i=1$ and $\sum_{j=k}^n\,\gamma_k=1$.
\end{proof}

This brings us to the following result which fully extends (\ref{psi}).

\begin{theorem}\label{walls}
Let $X$ and $n_1$, \dots, $n_r$ be as in (\ref{a}) and let $N_k=n_1+\dots+n_k$ ($N_0=0$).  Then
\begin{align}\label{psi2}
\chi_\lambda^{(m)}(e^X)
&=\frac{\Gamma(m\,n/2)}{\prod_{i=1}^r\,\Gamma(m\,n_i/2)}
\,e^{i\,\lambda_n\,\sum_{k=1}^r\,n_k\,a_k}
\,\int_{\sigma_r}\,\chi^{(m)}_{\lambda_0}(e^{\xi})
\,\prod_{i=1}^r\gamma_i^{m\,n_i/2-1}\,d\gamma
\end{align}
where the $e^{2\,\xi_i}$ are the roots of $q(x)=\prod_{j=1}^r\,(x-e^{2\,a_j})^{n_j-1}\,\sum_{i=1}^r
\,\gamma_i\,\prod_{j\not=i}\,(x-e^{2\,a_j})$.

More precisely,
\begin{align}
\xi&=\diag[\overbrace{a_1,\dots,a_1}^{n_1-1},\eta_1,\overbrace{a_2,\dots,a_2}^{n_2-1},\eta_2,\dots,\label{aa}\\
&\qquad\qquad\qquad \nonumber
\overbrace{a_{r-1},\dots,a_{r-1}}^{n_1-1},\eta_{r-1},\overbrace{a_r,\dots,a_r}^{n_r-1}]\in E(X)
\end{align}
with $\xi_k=a_i$ whenever $N_i<i<N_{i+1}$ and $\xi_{N_i}=\eta_i\in [a_i,a_{i+1}]$ are the roots of the 
polynomial $q_1(x)=\sum_{i=1}^r\,\gamma_i\,\prod_{j\not=i}\,(x-e^{2\,a_j})$.  Equivalently,
\begin{align*}
\gamma_p&=\frac{\prod_{j=1}^{r-1}\,(e^{2\,\eta_j}-e^{2\,a_p})}{\prod_{j\not=p}\,(e^{2\,a_j}-e^{2\,a_p})},
~p=1,\dots,r.
\end{align*}
\end{theorem}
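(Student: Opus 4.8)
The plan is to reduce formula (\ref{psi2}) to the already-established identity (\ref{psi}) by a limiting argument combined with the integration-out Lemma \ref{beta}. First I would fix the block structure $\pi(X)=\diag[a_1,\dots,a_1,\dots,a_r,\dots,a_r]$ and approximate $X$ by regular elements $X^{(\epsilon)}\in\a^+$ obtained by perturbing each block of $a_i$'s into $n_i$ distinct values clustered near $a_i$; concretely, replace the $j$-th entry of the $i$-th block by $a_i+\epsilon\,t_{i,j}$ for fixed distinct $t_{i,j}$. By Lemma \ref{TFSAE}(3) (or directly by the continuity statement there) we know $\chi_\lambda^{(m)}$ extends continuously to all of $\a$, so $\chi_\lambda^{(m)}(e^{X^{(\epsilon)}})\to\chi_\lambda^{(m)}(e^X)$ as $\epsilon\to0$. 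The idea is then to pass to the limit inside the integral representation (\ref{psi}) for $X^{(\epsilon)}$ and identify the limit with the right-hand side of (\ref{psi2}).

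The main computation is to understand what happens to the integral in (\ref{psi}) as the $n_i$ entries in each block collapse. Here is where Lemma \ref{beta} enters: I would argue that in the limit the $\beta_p$'s attached to a single block of size $n_i$ must be integrated against the weight $(\beta_p)^{m/2-1}$ each, and the polynomial $q(x)=\prod_{p=1}^n\beta_p\prod_{i\neq p}(x-e^{2x_i})$ degenerates, in view of Lemma \ref{TFSAE}(2) and its proof, to $q(x)=\prod_{j=1}^r(x-e^{2a_j})^{n_j-1}\,q_1(x)$ with $q_1(x)=\sum_{i=1}^r\gamma_i\prod_{j\neq i}(x-e^{2a_j})$ and $\gamma_i=\sum_{x_p=a_i}\beta_p$. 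Thus the $n_i-1$ "frozen" roots $e^{2\xi_k}=e^{2a_i}$ appear automatically, and only the single free root $\eta_i$ per block survives as a genuine variable; this gives the explicit form (\ref{aa}) for $\xi$. Applying Lemma \ref{beta} successively to each block (with $k=n_i$) converts $\int_{\sigma_n}(\cdot)(\beta_1\cdots\beta_n)^{m/2-1}\,d\beta$ into $\dfrac{\prod_{i=1}^r\Gamma(m/2)^{n_i}}{\prod_{i=1}^r\Gamma(mn_i/2)}\int_{\sigma_r}(\cdot)\prod_{i=1}^r\gamma_i^{mn_i/2-1}\,d\gamma$, and combining this with the prefactor $\Gamma(mn/2)/\Gamma(m/2)^n$ from (\ref{psi}) yields exactly $\Gamma(mn/2)/\prod_{i=1}^r\Gamma(mn_i/2)$, which is the prefactor in (\ref{psi2}); the exponential prefactor $e^{i\lambda_n\sum n_k a_k}$ is just $e^{i\lambda_n\sum_k x_k}$ evaluated at the collapsed point. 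The equivalence with the stated formula for $\gamma_p$ in terms of the $\eta_j$ is then the same elementary symmetric-function identity as in Lemma \ref{TFSAE}(1), applied to the reduced root system of size $r$.

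The remaining point is to make the interchange of limit and integral rigorous and to verify that the reduced integrand is itself continuous: the inner function $\chi_{\lambda_0}^{(m)}(e^\xi)$ depends continuously on $\gamma$ because, by Lemma \ref{TFSAE}, the roots $\eta_j$ of $q_1$ depend continuously on $\gamma\in\sigma_r$ and $\chi_{\lambda_0}^{(m)}$ is already known to be continuous on $\a$ (here one uses induction on $n$, the statement being trivial for $n=1$). Since $\sigma_r$ is compact and the weight $\prod\gamma_i^{mn_i/2-1}$ is integrable on it (as $\Re m>0$), dominated convergence applies. I expect the main obstacle to be bookkeeping: tracking precisely which $\beta_p$ merge into which $\gamma_i$, showing that along the family $X^{(\epsilon)}$ the $n_i-1$ "extra" roots of $q$ really converge to $e^{2a_i}$ at the right rate so that the corresponding $\xi_k$'s converge to $a_i$ (and land inside the closed intervals $[a_{i+1},a_i]$ defining $E(X)$), and confirming that the Gamma-factor exponents assemble correctly after iterating Lemma \ref{beta} block by block. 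None of these steps is deep, but the indexing must be handled carefully; once it is, formula (\ref{psi2}) and the description (\ref{aa}) follow by taking $\epsilon\to0$.
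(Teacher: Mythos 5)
Your proposal is correct and follows essentially the same route as the paper: formula (\ref{psi}) at the degenerate point, the block factorization $q(x)=\prod_j(x-e^{2a_j})^{n_j-1}q_1(x)$ so that the integrand depends on $\beta$ only through the block sums $\gamma_i$, and then block-by-block application of Lemma \ref{beta} to assemble the Gamma factors. The only difference is that your approximation by regular elements $X^{(\epsilon)}$ with dominated convergence is not needed, since Lemma \ref{TFSAE}(3) already asserts that (\ref{psi}) itself provides the continuous (Weyl-invariant) extension of $\chi_\lambda^{(m)}$ to non-regular $X$, which is all the paper invokes.
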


\begin{proof}
This follows from equation (\ref{psi}) and repeated use of Lemma \ref{beta}.
\end{proof}

\begin{theorem}\label{walls2}
We use the notation of Theorem \ref{walls} and assume that $X\in\overline{\a^+}$ for simplicity.  Then 
\begin{align}
\phi_\lambda^{(m)}(e^X)
&=\frac{\Gamma(m\,n/2)}{\prod_{i=1}^r\,\Gamma(m\,n_i/2)}
\,e^{i\,\lambda_n\,\sum_{k=1}^r\,n_k\,a_k}
\,\int_{E(X)}\,\phi^{(m)}_{\lambda_0}(e^\xi)\,\tilde{S}^{(m)}(\eta,X)
\label{phispecial}\\\nonumber &\qquad\qquad\qquad\cdot
\,\prod_{i<j}\,\sinh^m(\eta_i-\eta_j)\,d\eta
\end{align}
where
\begin{align}
\tilde{S}^{(m)}(\eta,X)&=
\prod_{i<j}\,\sinh^{1-m\,(n_i+n_j)/2}(a_i-a_j)
\,\prod_{i<j}\,\sinh^{1-m}(\eta_i-\eta_j)\label{Sprime}
\\ \qquad&\cdot\prod_{p=1}^r\,\left[
\prod_{i=1}^{p-1}\,\sinh(\eta_i-a_p) \,\prod_{i=p}^{r-1}\,\sinh(a_p-\eta_i)
\right]^{m\,n_p/2-1}\nonumber
\end{align}
\end{theorem}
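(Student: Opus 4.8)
The plan is to deduce (\ref{phispecial}) from equation (\ref{psi2}) of Theorem \ref{walls} by carrying out, inside the $\sigma_r$-integral, the change of variables $\gamma\mapsto\eta=(\eta_1,\dots,\eta_{r-1})$ in which the $e^{2\,\eta_j}$ are the roots of $q_1$. This substitution is precisely the map (\ref{betap}) with $n$, $(x_1,\dots,x_n)$, $\beta$, $\xi$ replaced by $r$, $(a_1,\dots,a_r)$, $\gamma$, $\eta$; by Lemma \ref{TFSAE}(2) it carries $\sigma_r$ onto $\{(\eta_1,\dots,\eta_{r-1})\colon a_{j+1}\le\eta_j\le a_j\}$, so that the element $\xi$ displayed in (\ref{aa}) lies in $E(X)$. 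Here the hypothesis $X\in\overline{\a^+}$ is used so that the interlacing is literally the one written in (\ref{aa}), with no extra Weyl element to keep track of.

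The computation then has three ingredients, each handled exactly as in the passage from (\ref{psi}) to (\ref{Spherical0}) of Theorem \ref{old}, but with $n$ replaced by $r$. First, the Jacobian of $(\gamma_1,\dots,\gamma_{r-1})\mapsto(\eta_1,\dots,\eta_{r-1})$, which I would quote from \cite{Sawyer1}. Second, the weight: using the inverse relation $\gamma_p=\prod_{j=1}^{r-1}(e^{2\,\eta_j}-e^{2\,a_p})/\prod_{j\ne p}(e^{2\,a_j}-e^{2\,a_p})$ one rewrites $\prod_{i=1}^r\gamma_i^{m\,n_i/2-1}$ in terms of the $\eta$'s and the $a$'s. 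Every factor $e^{2\,u}-e^{2\,v}$ occurring here, in the Jacobian, and in the remaining products is replaced by $2\,e^{u+v}\,\sinh(u-v)$, the sign of $u-v$ being read off from the interlacing $a_i\ge\eta_i\ge a_{i+1}$; this is what produces the split $\prod_{i<p}\sinh(\eta_i-a_p)\,\prod_{i\ge p}\sinh(a_p-\eta_i)$ and the exponents of $\sinh(a_i-a_j)$ in (\ref{Sprime}). Third, the passage from $\chi$ to $\phi$: I would replace $\lambda$ by $\lambda+i\,\rho$ (for $A_{n-1}$), so that the left side becomes $\phi^{(m)}_\lambda(e^X)$ while inside the integral $\chi^{(m)}_{\lambda_0}$ turns into $\phi^{(m)}_{\lambda_0+i\,(\rho_0-\rho')}$, where $\rho'$ is the $\rho$ of $A_{n-2}$ and $\rho_0$ is the reduction of $\rho$; as already in Theorem \ref{old}, $\rho_0-\rho'$ is a multiple of the trace functional, so by Remark \ref{trace} this contributes only an exponential $e^{c\sum_i\xi_i}$. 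Together with the exponential $e^{-\rho_n\sum_k n_k\,a_k}$ coming from the outer shift, and the prefactors $e^{u+v}$ above, all powers of $e$ must cancel, leaving the measure $d\eta$, the factor $\prod_{i<j}\sinh^m(\eta_i-\eta_j)$, and the kernel $\tilde{S}^{(m)}(\eta,X)$ of (\ref{Sprime}).

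So the proof is purely a bookkeeping exercise, and the delicate part, which I would organize it around, is checking that the exponential prefactors $e^{u+v}$ coming from $e^{2\,u}-e^{2\,v}=2\,e^{u+v}\sinh(u-v)$ cancel precisely against the exponentials produced by the $\rho$-shifts and by $e^{-\rho_n\sum_k n_k\,a_k}$, and that the signs inside the $\sinh$'s come out so that $\tilde{S}^{(m)}>0$ on the interior of $E(X)$. A useful check is that when every $n_i=1$ (so $r=n$ and $X$ is regular) the identity must reduce to the known equivalence of (\ref{psi}) and (\ref{Spherical0}); the only genuinely new contribution is then that of the extra factor $\prod_{i=1}^r\gamma_i^{m\,(n_i-1)/2}$ in the weight, and one has only to follow how it, rewritten via the inverse relation, feeds the exponents $1-m\,(n_i+n_j)/2$ on $\sinh(a_i-a_j)$ and $m\,n_p/2-1$ on the $\sinh(\eta_i-a_p)$. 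Alternatively one could avoid Theorem \ref{walls} and obtain (\ref{phispecial}) by letting a regular $X^{(\varepsilon)}\to X$ in (\ref{Spherical0}) and integrating out, as $\varepsilon\to0$, the directions of $E(X^{(\varepsilon)})$ that collapse onto the blocks of equal $a_i$; but the route through Theorem \ref{walls} is cleaner since it involves no limit.
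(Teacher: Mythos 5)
Your proposal follows essentially the same route as the paper's proof: change variables $\gamma\mapsto\eta$ in (\ref{psi2}), evaluate the Jacobian (the paper recomputes it via the Cauchy determinant identity quoted from \cite{Weyl} rather than citing \cite{Sawyer1}), rewrite $\prod_{i=1}^r\gamma_i^{m\,n_i/2-1}$ in terms of hyperbolic sines, and pass from $\chi$ to $\phi$ via $\phi_\lambda^{(m)}=\chi^{(m)}_{\lambda+i\,\rho^{(m)}}$, Remark \ref{trace} and the identity $\sum_{k=1}^{n-1}\xi_k=\sum_{k=1}^r(n_k-1)\,a_k+\sum_{k=1}^{r-1}\eta_k$. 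The exponential and sign bookkeeping you single out is precisely what the paper's displays (\ref{J}) and (\ref{G}) carry out, so your argument is correct and essentially identical.
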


Note that the definition of $E(X)$ remains the same but that $\dim E(X)=r-1$.

\begin{proof}We have $\gamma_p=\frac{\prod_{i=1}^{r-1}(e^{2\,\eta_i}-e^{2\,a_p})}
{\prod_{i\not=p}(e^{2\,a_i}-e^{2\,a_p})}$, $p=1$, \dots, $r$.  Now,
\begin{align}
\lefteqn{\frac{\partial(\gamma_1,\dots,\gamma_{r-1})}{\partial(\eta_1,\dots,\eta_{r-1})}
=\left|\det\left[
\frac{2\,e^{2\,\eta_q}\,\prod_{i\not=q}\,(e^{2\,\eta_i}-e^{2\,a_p})}{\prod_{i\not=p}\,(e^{2\,a_i}-e^{2\,a_p})}
\right]_{1\leq p,q\leq r-1}\right|}\nonumber\\
&=2^{r-1}
\,\frac{\prod_{p=1}^{r-1}\,\prod_{i=1}^{r-1}\,(e^{2\,\eta_i}-e^{2\,a_p})}{\prod_{p=1}^{r-1}\,\prod_{i\not=p}\,(e^{2\,a_i}-e^{2\,a_p})}
\,\left|\det\left[\frac{1}{1-e^{2\,a_p}\,e^{-2\,\eta_q}}\right]_{1\leq p,q\leq r-1}\right|
\nonumber\\
&=2^{r-1}
\,\frac{\prod_{i<j<r}\,(e^{2\,\eta_i}-e^{2\,\eta_j})}{\prod_{i<j\leq r}\,(e^{2\,a_i}-e^{2\,a_j})}
\label{J}\\&
=e^{r\,\sum_{i=1}^{r-1}\,\eta_i}\,e^{-(r-1)\,\sum_{i=1}^{r}\,a_i}
\,\frac{\prod_{i<j<r}\sinh(\eta_i-\eta_j)}{\prod_{i<j\leq r}\,\sinh(a_i-a_j)}\nonumber
\end{align}
using \cite[page 202]{Weyl}
\begin{align*}
\det\left [\frac{1}{1-\nu _i\lambda _k}\right ] &=\frac{\prod _{i>k}(\nu
_i-\nu _k)\prod_{i>k} (\lambda _i-\lambda _k)}{\prod_{i=1}^n\prod_{k=1}^n (1
-\nu _i\lambda _k) }.
\end{align*}

On the other hand,
\begin{align}
\prod_{p=1}^r\,\gamma_i^{m\,n_p/2-1}
&=e^{(m\,n/2-r)\,\sum_{k=1}^{r-1}\,\eta_k+\sum_{k=1}^r\,(m\,n_k/2+r-1-m\,n/2)\,a_k}\label{G}\\&\qquad
\cdot\frac{\prod_{p=1}^r\,
\left[
\prod_{i=1}^{p-1}\,\sinh(\eta_i-a_p)
\,\prod_{i=p}^{r-1}\,\sinh(a_p-\eta_i)
\right]^{m\,n_p/2-1}}
{\prod_{i<j}\,\sinh^{m\,(n_i+n_j)/2-2}(a_i-a_j)}.\nonumber
\end{align}

Recall that $\phi_\lambda^{(m)}(e^X)=\chi_{\lambda+i\,\rho^{(m)}_n}^{(m)}(e^X)$ and that $\rho^{(m)}(X)=\frac{m}{2}\,\sum_{k=1}^n\,(n+1-2\,k)\,x_k$.
Therefore, 
\begin{align*}
\phi_\lambda^{(m)}(e^X)&=\frac{\prod_{i=1}^r\,\Gamma(m\,n_i/2)}{\Gamma(m/2)^n}
\,e^{i\,\lambda_n\,\sum_{k=1}^r\,n_k\,a_k}
\,\prod_{i<j}\,\sinh^{1-m\,(n_i+n_j)/2}(a_i-a_j)	
\\ &\cdot
\,\int_{E(X)}\,\phi^{(m)}_{\lambda_0}(e^\xi)
\,\prod_{p=1}^r\,
\left[
\prod_{i=1}^{p-1}\,\sinh(\eta_i-a_p)
\,\prod_{i=p}^{r-1}\,\sinh(a_p-\eta_i)
\right]^{m\,n_p/2-1}
\\&\cdot
\prod_{i<j}\,\sinh(\eta_i-\eta_j)\,d\eta.
\end{align*}

We have used (\ref{J}), (\ref{G}), Remark \ref{trace} and the fact that $\sum_{k=1}^{n-1}\,\xi_k=\sum_{k=1}^r\,(n_k-1)\,a_k+\sum_{k=1}^{r-1}\,\eta_k$.
\end{proof}

\section{Representation of the spherical function}\label{Rep}

We will now derive a Laplace type representation for the spherical functions namely 
\begin{align*}
\phi^{(m)}_\lambda(e^X)=\int_{\a}\,e^{i\,\langle \lambda,H\rangle}\,K^{(m)}_n(H,X)\,dH
\end{align*}
and, at the same time, specify the support of the function $K^{(m)}_n(\cdot,X)$.

Recall that $C(X)$, the support of the spherical functions in the geometric case, is the convex envelop of $W\cdot X$, where $W$ is the Weyl group.   
In this section, we will show that $C(X)$ remains the support in the case of the generalized spherical functions (specifically when $m>0$).
Given the recursive formula for $\phi^{(m)}_\lambda$, it is not surprising that our first step is to derive an inductive description of $C(X)$.

\begin{definition}\label{A0}
Fix $X=\diag[x_1,\dots,x_n]\in\overline{\a^+}$.  For $H\in C(X)$, let $H'=\diag[h_1,\dots,h_{n-1}]$.  For $Y\in C_{n-1}(\xi)$, 
$\xi\in E(X)$, let $\hat{Y}=\diag[y_1,\dots,y_{n-1},\sum_{k=1}^n\,x_k-\sum_{k=1}^{n-1}\,\xi_k]$ and 
$\widehat{C(\xi)}=\{\hat{Y}\colon Y\in C(\xi)\}$.  
\end{definition}

The following characterization of $C(X)$ and $C(X)^\circ$ will prove useful.

\begin{remark}[\cite{Rado}]\label{desc}
If $X\in\overline{\a^+}$ then $H\in\a$ belongs to $C(X)$ if and only if $\tr X=\tr H$ and
\begin{align}
h_{i_1}+\dots+h_{i_k}&\leq x_1+\dots+x_k\label{ineq}
\end{align}
for every choice of distinct indices $i_1$, \dots, $i_k\in\{1,\dots,n\}$,   $1\leq k\leq n-1$.
Moreover, $H\in C(X)^\circ$ if and only if all the inequalities in (\ref{ineq}) are strict.
\end{remark}

\begin{proposition}\label{CONV}
If $X\in \overline{\a^+}$ then $C_n(X)=\cup_{\xi\in E(X)}\,\widehat{C_{n-1}(\xi)}$.
\end{proposition}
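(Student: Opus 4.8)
The plan is to prove the two inclusions $C_n(X)\subseteq\bigcup_{\xi\in E(X)}\widehat{C_{n-1}(\xi)}$ and $\bigcup_{\xi\in E(X)}\widehat{C_{n-1}(\xi)}\subseteq C_n(X)$ separately, using the Rado-type description in Remark \ref{desc} as the workhorse on both sides. Throughout, write $X=\diag[x_1,\dots,x_n]\in\overline{\a^+}$, and for $H=\diag[h_1,\dots,h_n]\in C_n(X)$ set $H'=\diag[h_1,\dots,h_{n-1}]$ as in Definition \ref{A0}. The key observation linking the two levels is that a point $\xi=(\xi_1,\dots,\xi_{n-1})\in E(X)$ is exactly an interlacing sequence $x_{k+1}\le\xi_k\le x_k$, and that $\widehat{C_{n-1}(\xi)}$ consists of the $\hat Y$ with $(y_1,\dots,y_{n-1})\in C_{n-1}(\xi)$ and last coordinate $\tr X-\sum_k\xi_k$ pinned by the trace constraint; so membership of $H$ in the union amounts to finding an interlacing $\xi\in E(X)$ with $H'\in C_{n-1}(\xi)$ (the trace on the $n$th coordinate is then automatic since $\tr H=\tr X$).

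For the inclusion $\bigcup_{\xi\in E(X)}\widehat{C_{n-1}(\xi)}\subseteq C_n(X)$, I would take $\xi\in E(X)$ and $\hat Y$ with $Y\in C_{n-1}(\xi)$, and verify the Rado inequalities (\ref{ineq}) for $\hat Y$ against $X$. The trace equality is immediate. For a subset of indices $\{i_1,\dots,i_k\}\subseteq\{1,\dots,n\}$: if it avoids the index $n$, then $y_{i_1}+\dots+y_{i_k}\le \xi_1+\dots+\xi_k\le x_1+\dots+x_k$, the first step by $Y\in C_{n-1}(\xi)$ and the second because $\xi_j\le x_j$ from the interlacing. If the subset contains $n$, rewrite the sum over the complementary indices using the trace identity and reduce to the previous case plus the interlacing bounds $\xi_j\ge x_{j+1}$; this is a short bookkeeping argument.

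The harder direction — and the step I expect to be the main obstacle — is $C_n(X)\subseteq\bigcup_{\xi\in E(X)}\widehat{C_{n-1}(\xi)}$: given $H\in C_n(X)$ one must \emph{construct} an interlacing $\xi\in E(X)$ with $x_{k+1}\le\xi_k\le x_k$ such that simultaneously $h_{k+1}\le\xi_k\le h_k$ (equivalently $\xi$ interlaces the decreasing rearrangement of $H'$) — this is precisely the kind of "doubly interlacing" statement that underlies the Gelfand–Tsetlin / Rado polytope structure. The natural approach is to show directly that the closed box $\prod_{k=1}^{n-1}[\max(x_{k+1},h_{k+1}),\,\min(x_k,h_k)]$ is nonempty, i.e.\ that $\max(x_{k+1},h_{k+1})\le\min(x_k,h_k)$ for each $k$; this follows from $x_{k+1}\le x_k$, $h_{k+1}\le h_k$, and the two cross-inequalities $h_{k+1}\le x_k$ and $x_{k+1}\le h_k$, each of which is a consequence of the Rado inequalities applied to a well-chosen index set (e.g.\ $h_{k+1}\le x_k$ from $h_{\sigma(1)}+\dots+h_{\sigma(k)}\le x_1+\dots+x_k$ with $\sigma$ ordering $H$ decreasingly, combined with $h_1+\dots+h_{k-1}\ge x_1+\dots+x_{k-1}$ obtained by complementation from an inequality for the remaining indices). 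Having produced such $\xi\in E(X)$, one checks $H'\in C_{n-1}(\xi)$ by verifying its Rado inequalities against $\xi$: $h_{i_1}+\dots+h_{i_j}\le \xi_1+\dots+\xi_j$. Here one is free to choose $\xi$ within the box to make this work — the cleanest choice is to take $\xi$ as large as the box allows subject to the remaining constraints, or to argue that the majorization $H'\preceq \xi$ holds for the extreme choice and extends by the convexity of $C_{n-1}(\xi)$ in $\xi$. I would therefore organize this direction as: (i) produce the box and a canonical $\xi$ in it; (ii) deduce $H'\in C_{n-1}(\xi)$ via Remark \ref{desc}; (iii) note $\widehat{H'}=H$ because the last coordinate is forced by $\tr H=\tr X=\tr\xi+(\text{last coord})$. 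The only real subtlety is keeping the complementation arguments for index sets containing $n$ straight, which is routine once the cross-inequalities $h_{k+1}\le x_k\le h_{k-1}$-type bounds are in hand.
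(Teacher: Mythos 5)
Your first inclusion, $\bigcup_{\xi\in E(X)}\widehat{C_{n-1}(\xi)}\subseteq C_n(X)$, is correct as sketched: verifying the Rado inequalities of Remark \ref{desc} for $\hat Y$ against $X$, splitting according to whether the index set contains $n$ and using $x_{k+1}\le\xi_k\le x_k$, goes through (the paper gets the same inclusion slightly differently, by convexity of $C(X)$ after checking only the points $\widehat{\tau\cdot\xi}=\tilde\tau\cdot\hat\xi$ with $\hat\xi\in C(X)$).

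The hard direction, however, has a genuine gap: the box construction on which it rests fails. You want, for each $H\in C_n(X)$, some $\xi\in E(X)$ with $H'\in C_{n-1}(\xi)$, and you propose to find it in $\prod_{k}[\max(x_{k+1},h_{k+1}),\min(x_k,h_k)]$, relying on the cross-inequality $h_{k+1}\le x_k$ (coordinates of $H$ sorted decreasingly). That inequality is false for general points of $C(X)$: take $n=3$, $X=\diag[3,0,0]$, $H=\diag[1,1,1]\in C(X)$; then $h_3=1>x_2=0$, so your box is empty, and indeed no $\xi$ interlacing both $X$ and $H$ exists --- the only admissible $\xi$ here is $(2,0)$ (forced by $\xi_2\in[x_3,x_2]=\{0\}$ and $\tr\xi=\tr H'=2$), and it does not interlace $H$. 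The auxiliary inequality you invoke, $h_1+\dots+h_{k-1}\ge x_1+\dots+x_{k-1}$ ``by complementation'', is also backwards: majorization gives $\le$ for sums of the largest coordinates, and complementation only yields lower bounds for sums of the smallest ones. More structurally, ``double interlacing'' points the wrong way: $H'\in C_{n-1}(\xi)$ forces $\xi_1\ge\max_k h_k'$ (largest coordinate of $H'$), whereas your box imposes upper bounds $\xi_k\le h_k$; if one reads the $h$'s as coordinates of $H'$, those upper bounds together with $\tr\xi=\tr H'$ would force $\xi$ to equal the decreasing rearrangement of $H'$, which in general does not lie in $E(X)$. So the closing remark that one is ``free to choose $\xi$ within the box to make this work'' is not a repair; producing $\xi$ is exactly the missing content. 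The paper sidesteps the pointwise construction entirely: it shows $A(X)=\bigcup_{\xi\in E(X)}\widehat{C_{n-1}(\xi)}$ is convex and contains every $\sigma\cdot X$ (take $\xi$ to be $X$ with the entry $x_{\sigma^{-1}(n)}$ deleted), whence $C(X)=\mathrm{conv}(W\cdot X)\subseteq A(X)$. If you insist on a constructive argument valid for every $H\in C(X)$, you need a perturbation/Horn-type argument (in the spirit of the paper's proof of Proposition \ref{CONV2}, where violated partial-sum constraints are relaxed by moving the $\xi_i$'s), which is substantially more delicate than the box argument you sketch.
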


\begin{proof}

We first show that $A(X)=\cup_{\xi\in E(X)}\,\widehat{C_{n-1}(\xi)}$ is a convex set.  
Let $H$ and $\tilde{H}\in A(X)$.  We have $H'=\sum_{\sigma \in S_{n-1}}\,\sigma\cdot \xi$ and 
$\tilde{H}'=\sum_{\sigma \in S_{n-1}}\,\sigma\cdot \tilde{\xi}$.  For for $t\in[0,1]$, we have
\begin{align*}
t\,H'+(1-t)\,\tilde{H}'
&=t\,\sum_{\sigma \in S_{n-1}}\,\sigma\cdot \xi+(1-t)\,\sum_{\sigma \in S_{n-1}}\,\sigma\cdot \tilde{\xi}\\
&=\sum_{\sigma \in S_{n-1}}\,\sigma\cdot (t\,\xi+(1-t)\, \tilde{\xi}).
\end{align*} 
Since $E(X)$ is convex, we have $\eta=t\,\xi+(1-t)\, \tilde{\xi}\in E(X)$.  Finally, it follows easily that 
$t\,H+(1-t)\,\tilde{H}\in \widehat{C(\eta)}$.

We next show that $C(X)\subset A(X)$.  Given that $A(X)$ is convex, we only need to show that $\sigma \cdot X\in A(X)$ for every $\sigma\in S_n$.  
For such a $\sigma$, we have $\sigma \cdot X=\diag[x_{\sigma^{-1}},\dots,x_{\sigma^{-1}(n)}]$.  Now define $\tau\in S_{n-1}$ by
$\tau^{-1}(k)=\left\lbrace\begin{array}{cc}\sigma^{-1}(k)&\hbox{if $\sigma^{-1}(k)<\sigma^{-1}(n)$}\\
\sigma^{-1}(k)-1&\hbox{if $k\geq \sigma^{-1}(n)$}\end{array}\right.$ for $k=1$, \dots, $n-1$ and $\xi$ by 
$\xi_k=\left\lbrace\begin{array}{cc}x_k&\hbox{if $k<\sigma^{-1}(n)$}\\
x_{k+1}&\hbox{if $\sigma^{-1}(k)\geq \sigma^{-1}(n)$}\end{array}\right.$ for $k=1$, \dots, $n-1$.  One verifies that $\xi\in E(H)$ and 
that $\sigma\cdot X=\widehat{\tau\cdot \xi}\in \widehat{C_{n-1}(\xi)}\subset A(X)$.

We now claim that $A(X)\subset C(X)$. 
Let $H\in \widehat{C(\xi)}$ for some $\xi \in E(X)$.  Since $H=\sum_{\tau\in S_{n-1}}\,b_\tau\,
\widehat{\tau\cdot \xi}$, it suffices to observe that $\widehat{\tau\cdot \xi}
=\tilde{\tau}\cdot \hat{\xi}$ where $\tilde{\tau}\in 
S_n$ is defined by $\tilde{\tau}(k)=\tau(k)$ if $k<n$ and $\tilde{\tau}(n)=n$.   Since $\hat{\xi}\in C(X)$ (which can be checked using Remark \ref{desc}), the claim follows.
\end{proof}

The next result sets the stage for the proof that the kernel $H\mapsto K(H,X)$ is strictly positive on $C(X)^\circ$.

\begin{proposition}\label{CONV2}
Suppose $X\in \overline{\a^+}$, $X\not=C\,I_n$, and use the notation in Theorem \ref{walls}. 
Let $E(X)^\circ=\{\xi\in E(X)\colon a_{k+1}<\eta_k<a_k\}$ where $\eta_k=\xi_{N_k}$, 
$1\leq k\leq r-1$.  Then $C(X)^\circ=\cup_{\xi\in E(X)^\circ}\,\widehat{C_{n-1}(\xi)^\circ}$
(if $n=2$ then assume $C_{n-1}(\xi)=C_{n-1}(\xi)^\circ=\{\xi\}$).
\end{proposition}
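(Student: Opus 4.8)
The plan is to mirror the proof of Proposition \ref{CONV} but track strictness of the defining inequalities in Remark \ref{desc} throughout. First I would establish the inclusion $\cup_{\xi\in E(X)^\circ}\,\widehat{C_{n-1}(\xi)^\circ}\subset C(X)^\circ$. Pick $\xi\in E(X)^\circ$ and $Y\in C_{n-1}(\xi)^\circ$; then $\hat Y$ satisfies $\tr\hat Y=\tr X$ by construction, and I need the strict inequalities $h_{i_1}+\dots+h_{i_k}<x_1+\dots+x_k$ for all index sets. Split into two cases according to whether $n\in\{i_1,\dots,i_k\}$. If $n$ is \emph{not} among the indices, all the $h_{i_j}$ come from the first $n-1$ coordinates, i.e.\ from $Y\in C_{n-1}(\xi)^\circ$, so by induction $h_{i_1}+\dots+h_{i_k}<\xi_1+\dots+\xi_k\le x_1+\dots+x_k$, the last step because $\xi\in E(X)$ (the top $k$ of the $\xi$'s are bounded by $x_1+\dots+x_k$, which follows from $\xi_j\le x_j$ coordinatewise and the ordering). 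If $n$ \emph{is} among the indices, say the indices are $\{i_1,\dots,i_{k-1},n\}$ with $i_j<n$, then $h_{i_1}+\dots+h_{i_{k-1}}+h_n = (h_{i_1}+\dots+h_{i_{k-1}}) + (\tr X-\sum_{j=1}^{n-1}\xi_j)$; using $\tr Y=\sum_{j=1}^{n-1}\xi_j$ I rewrite this as $\tr Y - (h_{j_1}+\dots+h_{j_{n-k}}) + \tr X - \tr Y = \tr X - (h_{j_1}+\dots+h_{j_{n-k}})$ where $\{j_1,\dots,j_{n-k}\}$ is the complementary set of indices inside $\{1,\dots,n-1\}$, and then applying the induction hypothesis in the ``lower-bound'' form (a strict lower bound on any partial sum of $Y$, equivalent to the strict upper bound on the complement) gives the desired strict inequality. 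I should be careful that when $E(X)^\circ$ is defined via $a_{k+1}<\eta_k<a_k$ the induced $\hat\xi$ genuinely lands in $C(X)^\circ$ (this is the strict analogue of the check ``$\hat\xi\in C(X)$'' in Proposition \ref{CONV}), which again is just Rado's criterion with strict inequalities.

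For the reverse inclusion $C(X)^\circ\subset\cup_{\xi\in E(X)^\circ}\,\widehat{C_{n-1}(\xi)^\circ}$, I would take $H\in C(X)^\circ$ and first, by Proposition \ref{CONV}, write $H\in\widehat{C_{n-1}(\xi)}$ for some $\xi\in E(X)$; the task is to show one can choose the decomposition so that in fact $\xi\in E(X)^\circ$ and $Y\in C_{n-1}(\xi)^\circ$. The natural candidate is to set $\xi_j$ equal to the interlacing values forced by $H'$ — i.e.\ take $\xi$ so that $H'\in C_{n-1}(\xi)$ is as ``interior'' as possible — and then read off from the strictness of the inequalities for $H$ that the interlacing is strict where it needs to be. Concretely: the coordinates $\xi_j$ are obtained from the eigenvalue-interlacing between $H'$ (reordered decreasingly) and $X$; strictness of $h_{i_1}+\dots+h_{i_k}<x_1+\dots+x_k$ at the relevant index sets translates into $a_{k+1}<\eta_k<a_k$ and into $H'\in C_{n-1}(\xi)^\circ$. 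Because the recursion reduces the rank by one, induction on $n$ closes the argument, with the base case $n=2$ handled by the stated convention.

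The main obstacle I anticipate is the bookkeeping in the reverse inclusion: unlike in Proposition \ref{CONV}, where it was enough to exhibit \emph{some} $\xi$, here I must exhibit a $\xi$ that is simultaneously interior in $E(X)$ \emph{and} for which $H'$ is interior in $C_{n-1}(\xi)$, and verifying that a single choice of $\xi$ does both jobs requires matching up precisely which of the $2^n-2$ Rado inequalities for $H$ control which of the (fewer) inequalities at the lower level. The cleanest route is probably to phrase everything in terms of the strict majorization order and use the standard fact that $H\in C(X)^\circ$ iff $H$ is strictly majorized by $X$ (all partial sums strict, trace equal), then invoke the interlacing construction; but I expect a short lemma isolating ``strict Rado for $\hat Y$ $\Leftrightarrow$ strict Rado for $Y$ together with $\xi\in E(X)^\circ$'' will be needed to keep the induction honest. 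Everything else is a routine transcription of the proof of Proposition \ref{CONV} with inequalities made strict, together with the observation that $X\ne C\,I_n$ guarantees $E(X)^\circ\ne\emptyset$ so the union is nonempty.
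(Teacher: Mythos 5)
Your first inclusion, $\cup_{\xi\in E(X)^\circ}\,\widehat{C_{n-1}(\xi)^\circ}\subset C(X)^\circ$, is essentially correct: splitting the Rado inequalities for $\hat Y$ according to whether the index $n$ occurs, using strictness of the level-$(n-1)$ inequalities (in the complementary, lower-bound form when $n$ occurs) together with $x_{j+1}\leq \xi_j\leq x_j$, is equivalent to the paper's argument, which runs the same computation by contradiction (a non-strict inequality for $\hat Y$ must involve $h_n$ and then forces $\xi_1+\dots+\xi_{n-1}<\xi_1+\dots+\xi_{n-1}$). Note that this half never actually uses $\xi\in E(X)^\circ$, only $\xi\in E(X)$ and $Y\in C_{n-1}(\xi)^\circ$; also, no induction on $n$ is involved -- the strict inequalities for $Y$ are just the definition of $C_{n-1}(\xi)^\circ$ via Remark \ref{desc}.

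The genuine gap is the reverse inclusion, which is the substantive half of the proposition. Given $H\in C(X)^\circ$ you must exhibit \emph{one} $\xi$ that lies in $E(X)^\circ$ \emph{and} satisfies $H'\in C_{n-1}(\xi)^\circ$, and your proposal only gestures at this: ``set $\xi_j$ equal to the interlacing values forced by $H'$'' does not define a candidate -- the admissible set $D_H(X)=\{\xi\in E(X)\colon H'\in C_{n-1}(\xi)\}$ is in general a whole polytope, there is no canonical interlacing partner of $X$ attached to $H'$, and the assertion that strictness of the Rado inequalities for $H$ ``translates into'' $a_{k+1}<\eta_k<a_k$ and $H'\in C_{n-1}(\xi)^\circ$ is precisely what has to be proved; it is simply false for some admissible choices of $\xi$ (with $H\in C(X)^\circ$ one can still have $H'$ saturating an inequality against a particular $\xi\in D_H(X)$, or $\xi$ sitting on the boundary of $E(X)$), so the issue is existence of a \emph{good} choice, not transcription of strictness. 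The appeal to induction on $n$ does not supply this: reducing the rank does not produce the required $\xi$. The paper closes the gap with an explicit iterative perturbation argument: starting from an arbitrary $\xi$ given by Proposition \ref{CONV}, it first repairs each non-strict inequality $h_{i_1}+\dots+h_{i_k}\leq\xi_1+\dots+\xi_k$ by transferring a small $\delta$ from some $\xi_j>x_{j+1}$ with $j>k$ to some $\xi_i<x_i$ with $i\leq k$ (the existence of this slack is exactly where $H\in C(X)^\circ$ enters), and then, in two further passes, pushes each $\eta_i$ strictly below $a_i$ and strictly above $a_{i+1}$, choosing every $\delta$ small enough to preserve the strictness already gained. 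Some construction of this kind, with the accompanying verifications, is what your ``short lemma'' would have to contain; as written, the reverse inclusion in your proposal is a statement of intent rather than a proof, as you yourself anticipate.
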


\begin{proof}
Let $H\in C(X)^\circ$.  We know by Proposition \ref{CONV} that there exists $\xi\in E(X)$ with $H'\in C(\xi)$.  We 
prove first that $\xi$ can be chosen so that $H'\in C(\xi)^\circ$.  From Remark \ref{desc}, 
for every distinct $i_1$, \dots, $i_{n-1}\in\{1,\dots,n-1\}$: $h_1+\dots+h_{n-1}=\xi_1+\dots+\xi_{n-1}$ and 
\begin{align}
h_{i_1}+\dots+h_{i_k}&\leq \xi_1+\dots+\xi_k\label{xi},~~1\leq k<n-1.
\end{align}

Suppose $H'\not\in C(\xi)^\circ$ and assume $h_{i_1}\leq\dots\leq h_{i_{n-1}}$.  
Let $k<n-1$ be the smallest index for which an inequality in (\ref{xi}) is not strict.  
We must have $\xi_i<x_i$ for some $i\leq k$ (otherwise $h_{i_1}+\dots +h_{i_k}=\xi_1+\dots+\xi_k=x_1+\dots+x_k$ which contradicts $H\in C(X)^\circ$).
We must also have $\xi_j>x_{j+1}$ for some $j> k$; otherwise $h_{i_1}+\dots+h_{i_k}= 
\xi_1+\dots+\xi_k$ and $h_{i_1}+\dots+h_{i_{n-1}}= \xi_1+\dots+\xi_{n-1}$ would mean 
$h_{i_{k+1}}+\dots+h_{i_{n-1}}= x_{k+2}+\dots+x_n$ \emph{i.e.} 
$\sum_{{\genfrac{}{}{0pt}{}{r\not=i_s,}{s\geq k+1}}}\,h_r
= x_1+\dots+x_{k+1}$ which contradicts $H\in C(X)^\circ$.   Let $\tilde{\xi}_i=\xi_i+\delta$, 
$\tilde{\xi_j}=\xi_j-\delta$ and $\tilde{\xi}_s=\xi_s$ if $s\not=i$ and $s\not=j$ where
$0<\delta<\min\{x_i-\xi_i,\xi_j-x_{j+1}\}$.  
We replace $\xi$ by $\tilde{\xi}$ and note that $H'\in C(\tilde{\xi})$.  If $H'\not\in  C(\tilde{\xi})^\circ$ then the smallest $k$ 
for which an inequality in (\ref{xi}) is not strict will be larger.  Eventually, the process will stop.

We may now assume that $H'\in C(\xi)^\circ$.  We now show that we can select $\xi$ with $\eta_i<a_i$ for $i=1$, \dots, $r-1$.
Suppose $\xi$ does not satisfy that condition.  Let $k\leq r-1$ be the smallest index such that $\eta_k=a_k$.  
If $k=1$, observe that there will be an index $j>1$ such that $\eta_j<a_j$ 
(otherwise $h_{i_1}+\dots+h_{i_{n-1}}=x_1+\dots+x_{n-1}$ which contradicts $H\in C(X)^\circ$); let $\tilde{\eta}_1=\eta_1-
\delta$, $\tilde{\eta}_j=\eta_j+\delta$ and $\tilde{\eta}_i=\eta_i$ if $i\not=1$ and $i\not=j$ with $0<\delta<\min\{\eta_1-a_2=a_1-a_2,a_j-\eta_j,\xi_1+
\dots+\xi_s-h_{i_1}-\dots-h_{i_s},1\leq s\leq j\}$. We replace $\xi$ by $\tilde{\xi}$  by changing $\eta$ and $\tilde{\eta}$. 
We may now assume that $k>1$; let $\tilde{\eta}_1=\eta_1+\delta$, $\tilde{\eta}_k=\eta_k-\delta$ and $\tilde{\eta}_i=\eta_i$ for $i\not=1$ and $i\not=k$ with $0<\delta<
\min\{a_1-\eta_1,a_k-a_{k+1}\}$.  We still have $H'\in C(\tilde{\xi})^\circ$ and the smallest index such that $\tilde{\eta}_i=a_i$, if any, will be larger. 
Eventually, the process will stop.

We now assume that $H\in C(\xi)^\circ$ and that $\xi$ satisfies $\eta_i<a_i$ for each $i$.  To ensure that $\eta_i>a_{i+1}$ for all $i$, we proceed in much the same way except that we consider the largest index such that $\eta_k=a_{k+1}$ (if any).  The rest is as before.

Now let $H\in \widehat{C(\xi)^\circ}$ with $\xi\in E(X)^\circ$ and suppose $H\not\in C(X)^\circ$.  Since 
$H'\in C(\xi)^\circ$, we must have $h_{i_1}+\dots+h_{i_{s-1}}+h_n=x_1+\dots+x_s$ for some $s<n$. 
Therefore, since $H'\in C(\xi)^\circ$ and $\xi\in E(X)^\circ$, $\xi_1+\dots
+\xi_{n-1}=h_1+\dots+h_{n-1}=h_1+\dots+h_{n-1}+h_n-h_n=x_1+\dots+x_n-h_n=(h_{i_1}+\dots+h_{i_{s-1}}+h_n)+(x_{s+1}+\dots+x_n)-h_n=(h_{i_1}+\dots+h_{i_{s-1}})+
(x_{s+1}+\dots+x_n)<(\xi_1+\dots+\xi_{s-1})+(\xi_s+\dots+\xi_{n-1})$ which is absurd.
\end{proof}

\begin{remark}\label{dense}
Suppose $H\in C(X)^\circ$ and $H'\in C(\xi)$ with $\xi\in E(H)$.  By choosing the successive $\delta$'s in the proof small enough, one can choose 
$\tilde{\xi}\in E(X)^\circ$ to be arbitrarily close to $\xi$ with $H'\in C(\tilde{\xi})^\circ$.
\end{remark} 

We can now provide a Laplace-type representation for the generalized spherical function associated to the root system $A_{n-1}$ along with the support of the dual of the Abel transform.

\begin{theorem}\label{CX}
Assume $m>0$ and suppose $\phi_\lambda^{(m)}$ is the generalized spherical function for the root system $A_{n-1}$ in the trigonometric setting 
and, for $X\in\a$, $X\not=c\,I_n$ and $H\in C(X)$, let $D_H(X)=\{\xi\in E(X)\colon H'\in C(\xi)\}$.  Then 
\begin{align}
\phi_\lambda^{(m)}(e^X)&=\int_{C(X)}\,e^{i\,\lambda(H)}\,K^{(m)}_n(H,X)\,dH
\label{pos}
\end{align}
where 
\begin{align}
K^{(m)}_n(H,X)&=
\frac{\Gamma(m\,n/2)}{\prod_{i=1}^r\,\Gamma(m\,n_i/2)}
\,\int_{D_H(X)} \,K^{(m)}_{n-1}(H',\xi)\,\tilde{S}^{(m)}(\eta,X)\,\prod_{i<j}\,\sinh^m(\eta_i-\eta_j)\,d\eta_1\cdots d\eta_{r-2}\label{Km}
\end{align}
is strictly positive and smooth on $C(X)^\circ$.  When $r=2$, $D_H(X)$ contains only one element  and (\ref{Km}) should be interpreted as 
\begin{align*}
K^{(m)}_n(H,X)&=
\frac{\Gamma(m)}{\prod_{i=1}^2\,\Gamma(m\,n_i/2)}
K^{(m)}_{n-1}(H',\xi)\,\tilde{S}^{(m)}(\eta,X).
\end{align*}
Furthermore, if $n=2$ then $K^{(m)}_{n-1}(H',\xi)$ should be replaced by 1.
\end{theorem}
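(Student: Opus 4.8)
The plan is to prove (\ref{pos})--(\ref{Km}) by induction on $n$, using Theorem \ref{walls2} as the engine and Proposition \ref{CONV}/\ref{CONV2} to keep track of supports. The base case $n=1$ is trivial: $\phi_\lambda^{(m)}(e^X)=e^{i\lambda_1 x_1}$, so $K_1^{(m)}(H,X)=\delta$-type concentration at $H=X$, and the convention "$K_{n-1}^{(m)}(H',\xi)$ is replaced by $1$ when $n=2$" absorbs this into the first genuine step. So assume the representation holds for $n-1$, i.e. $\phi_{\lambda_0}^{(m)}(e^\xi)=\int_{C_{n-1}(\xi)}e^{i\langle\lambda_0,Y\rangle}K_{n-1}^{(m)}(Y,\xi)\,dY$ for every $\xi$ in the relevant range, with $K_{n-1}^{(m)}(\cdot,\xi)$ supported on $C_{n-1}(\xi)$ and strictly positive and smooth on $C_{n-1}(\xi)^\circ$.

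First I would substitute the inductive formula for $\phi_{\lambda_0}^{(m)}(e^\xi)$ into the right-hand side of (\ref{phispecial}) of Theorem \ref{walls2}. This gives a double integral over $\eta\in E(X)$ and $Y\in C_{n-1}(\xi)$. The key computation is to change the $Y$-variable into the "hatted" variable $\hat Y=\diag[y_1,\dots,y_{n-1},\tr X-\sum_k\xi_k]$ of Definition \ref{A0}, so that the exponential $e^{i\lambda_n\sum_k n_k a_k}\,e^{i\langle\lambda_0,Y\rangle}$ combines into $e^{i\langle\lambda,\hat Y\rangle}$, exactly as in Remark \ref{trace} (this is where $\lambda_0(\xi)=\sum(\lambda_i-\lambda_n)\xi_i$ and the last coordinate of $\hat Y$ are tailored to cancel). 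After this the integrand no longer sees $\eta$ except through $K_{n-1}^{(m)}(H',\xi)$, $\tilde S^{(m)}(\eta,X)$, and the $\prod_{i<j}\sinh^m(\eta_i-\eta_j)$ factor, so I can exchange the order of integration (Tonelli; everything is nonnegative for $m>0$) and read off $K_n^{(m)}(H,X)$ as the inner $\eta$-integral over the slice $D_H(X)=\{\xi\in E(X):H'\in C_{n-1}(\xi)\}$, which is (\ref{Km}). The dimension bookkeeping ($\dim E(X)=r-1$, but one degree of freedom is fixed by requiring the last coordinate of $\hat Y$ to equal $h_n$, leaving $r-2$ free $\eta$'s) matches the "$d\eta_1\cdots d\eta_{r-2}$" in (\ref{Km}) and the "$D_H(X)$ is a single point when $r=2$" remark. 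That the resulting $K_n^{(m)}(\cdot,X)$ is supported in $C_n(X)$ is then immediate from Proposition \ref{CONV}: $H$ appears only via $H'\in C_{n-1}(\xi)$ with $\xi\in E(X)$, which is precisely the condition $H\in\widehat{C_{n-1}(\xi)}$ for some $\xi\in E(X)$, i.e. $H\in C_n(X)$.

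Next I would establish strict positivity and smoothness on $C_n(X)^\circ$. For positivity, fix $H\in C_n(X)^\circ$. By Proposition \ref{CONV2} (and Remark \ref{dense}) there is $\xi\in E(X)^\circ$ with $H'\in C_{n-1}(\xi)^\circ$, so $D_H(X)$ has nonempty interior in the $(r-2)$-dimensional parameter space; on that open set the inductive hypothesis gives $K_{n-1}^{(m)}(H',\xi)>0$, the weight $\prod_{i<j}\sinh^m(\eta_i-\eta_j)$ is $>0$ since the $\eta_i$ are strictly decreasing there, and $\tilde S^{(m)}(\eta,X)>0$ because each $\sinh(\eta_i-a_p)$, $\sinh(a_p-\eta_i)$, $\sinh(\eta_i-\eta_j)$, $\sinh(a_i-a_j)$ appearing in (\ref{Sprime}) is strictly positive when $a_{k+1}<\eta_k<a_k$ and the $a_i$ are strictly decreasing (using $X\ne cI_n$ so $r\ge2$). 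Hence the integrand in (\ref{Km}) is strictly positive on a set of positive measure, so $K_n^{(m)}(H,X)>0$. Smoothness follows because the integrand depends smoothly on $(H,X)$ and $\eta$ away from the walls, the domain $D_H(X)$ varies continuously, and near any $H\in C_n(X)^\circ$ one can localize so the relevant integrals are over compact subsets of the open region where no $\sinh$-factor vanishes — a standard differentiation-under-the-integral-sign argument.

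The main obstacle I expect is the careful reconciliation of the two slightly different parametrizations in play: Theorem \ref{walls2} writes the $\eta$-integral over all of $E(X)$ (dimension $r-1$), whereas in (\ref{Km}) one of those degrees of freedom has been traded for the constraint fixing $h_n$, leaving an $(r-2)$-dimensional fiber $D_H(X)$ — so the change of variables $Y\mapsto\hat Y$ and the ensuing Fubini step have to be set up so that the last coordinate of $\hat Y$ (which depends on $\sum\xi_k=\sum(n_k-1)a_k+\sum\eta_k$) is correctly identified with $h_n$, and the Jacobian absorbed. The degenerate cases — $r=2$ (so $D_H(X)$ is a single point and (\ref{Km}) is purely algebraic, no integration), $n=2$ (so additionally $K_{n-1}^{(m)}\equiv1$), and the excluded case $X=cI_n$ (where $C(X)=\{cI_n\}$ has empty interior and there is nothing to prove) — each need a line of separate verification, but these are routine once the generic step is in place. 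I would also remark that the normalization $K_n^{(m)}(\cdot,X)$ integrates to $\phi_0^{(m)}(e^X)=1$, which provides a useful consistency check on the constant $\Gamma(mn/2)/\prod_i\Gamma(mn_i/2)$.
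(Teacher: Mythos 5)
Your proposal is correct and follows essentially the same route as the paper: induction on $n$, substitution of the inductive representation into the recursion of Theorem \ref{walls2}, a Fubini step to read off $K^{(m)}_n$ as an integral over the $(r-2)$-dimensional slice $D_H(X)$, support from Proposition \ref{CONV}, and positivity on $C(X)^\circ$ via Proposition \ref{CONV2} and Remark \ref{dense} (the paper makes the ``nonempty interior'' claim precise with an explicit $\epsilon$-argument showing $D^\circ_H(X)$ is open in $E_H(X)$ and dense in $D_H(X)$, but this is the same idea). The only cosmetic difference is that the paper anchors the induction at $n=2$ with the explicit kernel $K^{(m)}_2(H,X)$ rather than at $n=1$ with a delta convention.
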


\begin{remark}\label{Rm}
If we consider the larger range $\Re m>0$ then the result remains valid except that $K^{(m)}_n(\cdot,X)\not=0$ on $C(X)^\circ$ does not necessarily follow.  
All we can conclude then is that $\hbox{support}\,K^{(m)}_n(\cdot,X)\subset C(X)$.

In the geometric case, when $X=c\,I_n$, the measure in (\ref{pos}) is the Dirac measure $\delta_X$.   Using Proposition \ref{psi2} and induction, we can conclude that the same holds in the generalized setting.
\end{remark}

\begin{proof}
Assume $X\in\a$, $X\not=c\,I_n$. Since $\phi_\lambda^{(m)}(e^X)=\phi_\lambda^{(m)}(e^{\pi(X)})$, we can assume without loss of generality that $X\in\overline{\a^+}$.
We prove the result by induction on $n\geq 2$.  If $n=2$ then we have
\begin{align}
K^{(m)}_2(H,X)&=\frac{\Gamma(m)}{(\Gamma(m/2))^2}\,\sinh^{1-m}(x_1-x_2)\,[\sinh(x_1-h_1)\,\sinh(h_2-x_2)]^{m/2-1}
\label{K2}
\end{align}
which is smooth and strictly positive when $x_1>h_1\geq h_2>x_2$ \emph{i.e.} when $H\in C(X)^\circ$ ($X\in\overline{\a^+}$, $X\not=C\,I_2$ implies $X\in\a^+$).  Note that if
$m\not=2$, $K^{(m)}_2(H,X)$ is either equal to 0 (when $m>2$) or infinite (when $0<m<2$) when $H\in\partial C(X)$; $K^{(2)}_2(H,X)>0$ for 
$H\in C(X)$ and is 0 elsewhere.  

Assume now that the result is true for $n-1$, $n\geq 3$.  Suppose first that $r>2$.  For $H\in C(X)^\circ$, let 
\begin{align}
D_H(X)&=\{\xi\in E(X)\colon H'\in C(\xi)\},\nonumber\\
D^\circ_H(X)&=\{\xi\in E(X)^\circ\colon H'\in C(\xi)^\circ\},\label{DH}\\
E_H(X)&=\{\xi\in E(X)\colon \tr \xi=\tr H'\}.\nonumber
\end{align}

The sets in (\ref{DH}) can all be parametrized by $\eta_1$, \dots, $\eta_{r-2}$ since
$h_1+\dots+h_{n-1}=\xi_1+\dots+\xi_{n-1}=\eta_1+\dots+\eta_{r-1}+\sum_{i=1}^r\,(n_i-1)\,a_i=h_1+\dots+h_{n-1}$.  
Furthermore, $D^\circ_H(X)\subseteq D_H(X)\subseteq E_H(X)$.  Both $D_H(X)$ and $E_H(X)$ are closed sets.
We claim that that $D^\circ_H(X)$ is a nonempty open subset of $E_H(X)$ which is dense in $D_H(X)$. 
Indeed, observe first that $D^\circ_H(X)\not=\emptyset$ is a consequence of Proposition \ref{CONV2}.  
Now, let $\tilde{\xi}\in D^\circ_H(X)$ and let $0<\epsilon<
\min\{
(\xi_1+\dots+\xi_s-h_{i_1}-\dots-h_{i_s})/s, s=1,\dots,n-2, 
a_i-\eta_i,\eta_i-a_{i+1}
\}$ (the indices $i_s$ are assumed to be distinct and between 1 and $n-1$).  It is not difficult to check that if $\xi\in E_H(X)$ and 
$|\eta_k-\tilde{\eta}_k|<\epsilon$ for all $k$ then $\xi\in E(X)^\circ$ and $H'\in C(\xi)^\circ$ \emph{i.e.} 
that $\xi\in D^\circ_H(X)$.  The fact that $D^\circ_H(X)$ is dense in $D_H(X)$ follows easily from Remark \ref{dense}.  We have
\begin{align}
\phi^{(m)}_\lambda(e^X)&=
\frac{\Gamma(m\,n/2)}{\prod_{i=1}^r\,\Gamma(m\,n_i/2)}
\,e^{i\,\lambda_n\,\sum_{k=1}^r\,n_k\,a_k}
\label{integrand}
\\\qquad&
\cdot\int_{E(X)}\,\int_{C(\xi)}\,e^{i\,\lambda_0(H')}\,K^{(m)}_{n-1}(H',\xi)\,dH'
\,\tilde{S}^{(m)}(\eta,X)\,\prod_{i<j}\,\sinh^m(\eta_i-\eta_j)\,d\eta\nonumber
\end{align}

From (\ref{integrand}), we obtain expression (\ref{pos}) with
\begin{align*}
K^{(m)}_n(H,X)&=
\frac{\Gamma(m\,n/2)}{\prod_{i=1}^r\,\Gamma(m\,n_i/2)}
\,\int_{D_H(X)} \,K^{(m)}_{n-1}(H',\xi)\,\tilde{S}^{(m)}(\eta,X)\,\prod_{i<j}\,\sinh^m(\eta_i-\eta_j)\,d\eta_1\cdots d\eta_{r-2}\\
&=
\frac{\Gamma(m\,n/2)}{\prod_{i=1}^r\,\Gamma(m\,n_i/2)}
\,\int_{D^\circ_H(X)} \,K^{(m)}_{n-1}(H',\xi)\,\tilde{S}^{(m)}(\eta,X)\,\prod_{i<j}\,\sinh^m(\eta_i-\eta_j)
\,d\eta_1\cdots d\eta_{r-2}>0
\end{align*}
since the integrand is smooth and strictly positive over an open subset of $E_H(X)$.

If $r=2$ then $D_H(X)=\{\xi\}$ with $\eta=\eta_1=\sum_{k=1}^{n-1}h_k-(n_1-1)\,a_1-(n_2-1)\,a_2$ and
\begin{align*}
K^{(m)}_n(H,X)&=\frac{\Gamma(m)}{\prod_{i=1}^2\,\Gamma(m\,n_i/2)}
K^{(m)}_{n-1}(H',\xi)\,\tilde{S}^{(m)}(\eta,X)
\end{align*}
and the result follows.
\end{proof}

\begin{remark}
Graczyk and Loeb have given a fairly explicit construction for the kernel of the Abel transform in the case of 
complex Lie groups in \cite{Graczyk} while Graczyk and Sawyer have also given in \cite{PGPS4} an expression in the 
case of Lie groups of noncompact type (with a few exceptions of low dimension).  In either case, the support of the 
kernel is not obvious from the expression (although known since we are in the geometric setting).  In \cite{Trimeche}, Trim\`eche has provided sensibly the same 
expression as we have here in the case $A_2$ ($n=3$) and showed that the support of the kernel is included in $C(X)$.
\end{remark}

\section{The Dunkl setting}\label{Dunkl}

We use a result of de Jeu in \cite{DeJeu} (taking ``rational limits'') to adapt the results of Section \ref{generalized} and Section \ref{Rep} to the Dunkl setting. 
Using a result from Rejeb, this allows us in turn to describe precisely the support of the Dunkl intertwining operator $V$. 

\begin{theorem}\label{psidunkl}
We use the notation set up in Definition \ref{notaplus}.  
The generalized Dunkl spherical function associated to the root system $A_{n-1}$ in the Dunkl settingis given by 
$\psi_\lambda^{(m)}(e^X)=e^{i\,\lambda(X)}$ when $n=1$ and, for 
$n\geq 2$,
\begin{align}
\psi_\lambda^{(m)}(e^X)
&=\frac{\Gamma(m\,n/2)}{\prod_{i=1}^r\,\Gamma(m\,n_i/2)}
\,e^{i\,\lambda_n\,\sum_{k=1}^r\,n_k\,a_k}
\,\int_{E(X)}\,\psi^{(m)}_{\lambda_0}(e^\xi)\,T^{(m)}(\eta,X)
\,d_0(\eta)^m\,d\eta\label{Spher0}
\end{align}
where $X\in\overline{\a^+}$, $E(X)$, and $\lambda_0$ are as before,  $d_0(\eta)=\prod_{r<s}\,(\eta_r-\eta_s)$ and
\begin{align*}
T^{(m)}(\eta,X)&=
\prod_{i<j}\,(a_i-a_j)^{1-m\,(n_i+n_j)/2} \,d_0(\xi)^{1-m}
\prod_{p=1}^r\,\left[
\prod_{i=1}^{p-1}\,(\eta_i-a_p) \,\prod_{i=p}^{r-1}\,(a_p-\eta_i)
\right]^{m\,n_p/2-1}.
\end{align*}
\end{theorem}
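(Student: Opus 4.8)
The plan is to obtain (\ref{Spher0}) as a ``rational limit'' (contraction) of the trigonometric formula (\ref{phispecial}) of Theorem \ref{walls2}, which is exactly the mechanism referred to in \cite{DeJeu}. De Jeu's rational-limit result provides, for every spectral parameter $\lambda$ and every $X\in\a$,
\begin{align*}
\psi^{(m)}_\lambda(e^X)=\lim_{\epsilon\to 0^+}\,\phi^{(m)}_{\lambda/\epsilon}(e^{\epsilon\,X}),
\end{align*}
with locally uniform convergence. The first step is to confirm this in the present normalization: after the rescaling $X\mapsto\epsilon\,X$, $\lambda\mapsto\lambda/\epsilon$, the system of Definition \ref{defphi} degenerates as $\epsilon\to 0$ (after the standard renormalization of the operators $D^{(m)}_r$, using $\coth(\epsilon\,t)=(\epsilon\,t)^{-1}+O(\epsilon)$) to the flat rational Dunkl system characterizing $\psi^{(m)}_\lambda$, the $\rho^{(m)}$-shift relating $\phi$ and $\chi$ becoming negligible against $\lambda/\epsilon$, and one keeps track of the non-traceless convention of Remark \ref{trace}. (For $n=1$ both sides equal $e^{i\,\lambda(X)}$ independently of $\epsilon$, so the identity holds trivially there.)

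Next I would substitute $X\mapsto\epsilon\,X$, $\lambda\mapsto\lambda/\epsilon$ in (\ref{phispecial}) and change variables $\eta\mapsto\epsilon\,\eta$ in the integral; since the block description (\ref{aa}) is linear in the coordinates, this sends $\xi$ to $\epsilon\,\xi$ and $E(\epsilon\,X)$ to $\epsilon\,E(X)$. The exponential prefactor is unchanged, $e^{i(\lambda_n/\epsilon)\sum_k n_k(\epsilon\,a_k)}=e^{i\,\lambda_n\sum_k n_k a_k}$; the inner factor $\phi^{(m)}_{\lambda_0/\epsilon}(e^{\epsilon\,\xi})$ converges to $\psi^{(m)}_{\lambda_0}(e^{\xi})$, which is again an instance of de Jeu's limit for the root system $A_{n-2}$; and each hyperbolic sine is expanded via $\sinh(\epsilon\,t)=\epsilon\,t\,(1+O(\epsilon^2))$. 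Collecting the powers of $\epsilon$ produced by $\tilde S^{(m)}(\epsilon\,\eta,\epsilon\,X)$, by $\prod_{i<j}\sinh^m(\epsilon\,(\eta_i-\eta_j))$, and by the Jacobian $\epsilon^{\,r-1}$ of the rescaling $\eta\mapsto\epsilon\,\eta$ (recall $\dim E(X)=r-1$), a short computation shows that the total exponent of $\epsilon$ is zero and, crucially, that the $m$-dependent contributions cancel. Consequently the integrand converges pointwise to $\psi^{(m)}_{\lambda_0}(e^{\xi})\,T^{(m)}(\eta,X)\,d_0(\eta)^m$, where $T^{(m)}$ is obtained from $\tilde S^{(m)}$ by replacing each $\sinh$ of a difference by that difference itself; this is precisely the kernel in the statement.

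Finally one must interchange the limit with the integration over the compact set $E(X)$. For this I would invoke dominated convergence: along the lower-dimensional faces of $E(X)$ the factor $T^{(m)}(\cdot,X)$ has only singularities of the form $t^{m\,n_i/2-1}$, locally integrable because $\Re m>0$, while $|\phi^{(m)}_{\lambda_0/\epsilon}(e^{\epsilon\,\xi})|$ is bounded uniformly for small $\epsilon>0$ and $\xi\in E(X)$ (from de Jeu's estimates, or directly from the recursion (\ref{psi2})); together these furnish an $\epsilon$-independent integrable majorant, after which passing to the limit yields (\ref{Spher0}).

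The power-of-$\epsilon$ bookkeeping is routine. The two points that genuinely require care are (i) the uniform-in-$\epsilon$ integrable majorant near the walls of $E(X)$, where $T^{(m)}$ is singular and the trigonometric inner function must be controlled without appealing to the explicit positive kernel of Theorem \ref{CX}; and (ii) pinning down the contraction identity $\psi^{(m)}_\lambda=\lim_{\epsilon\to0^+}\phi^{(m)}_{\lambda/\epsilon}(e^{\epsilon\,\cdot})$ in exactly the present conventions. I expect (i) to be the main obstacle.
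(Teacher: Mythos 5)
Your proposal follows essentially the same route as the paper's own proof: it cites de Jeu's rational limit $\psi^{(m)}_\lambda(e^X)=\lim_{\epsilon\to0}\phi^{(m)}_{\lambda/\epsilon}(e^{\epsilon\,X})$, rescales $\eta=\epsilon\,\tilde{\eta}$ in (\ref{phispecial}) with the same power-of-$\epsilon$ bookkeeping, applies de Jeu's theorem a second time to the inner factor, and concludes by dominated convergence. The point you flag as the main obstacle is handled in the paper exactly as you sketch, by dominating the rescaled kernel by $M_X\,T^{(m)}(\eta,X)\,d_0(\eta)^m$ via elementary bounds of the type $\tfrac{1}{2}\,e^u\,\tfrac{u}{1+u}\leq\sinh u\leq e^u\,\tfrac{u}{1+u}$ on the compact set $E(X)$, so your argument is correct and essentially identical to the paper's.
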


\begin{proof}
We apply the Weyl-invariant version of \cite[Theorem 4.13]{DeJeu}: we have
\begin{align*}
\psi^{(m)}_\lambda(e^X)&=\lim_{\epsilon\to0}\,\phi^{(m)}_{\lambda/\epsilon}(e^{\epsilon\,X})
\end{align*}
uniformly in $\lambda$ and $X$ over compact sets (this technique has been described as ``taking rational limits'').  We then use the change of variable $\eta=\epsilon\,\tilde{\eta}$ which means that $\xi=\epsilon\,\tilde{\xi}$ and 
$d\eta=\epsilon^{r-1}\,d\tilde{\eta}$.  Counting the powers of $\epsilon$ carefully, we have
\begin{align}
\lefteqn{\phi^{(m)}_{\epsilon\,\lambda_0}(e^\xi)\,\tilde{S}^{(m)}(\eta,\epsilon\,X)
\,\prod_{i<j}\,\sinh^m(\eta_i-\eta_j)\,d\eta}\label{mess}\\
&=
\phi^{(m)}_{\epsilon\,\lambda_0}(e^{\epsilon\,\tilde{\xi}})\,\prod_{i<j}\,\left[\frac{\sinh(\epsilon\,a_i-\epsilon\,a_j)}{\epsilon}\right]^{1-m\,(n_i+n_j)/2}
\,\prod_{i<j}\,\frac{\sinh(\epsilon\,\tilde{\eta}_i-\epsilon\,\tilde{\eta}_j)}{\epsilon} \nonumber
\\ \qquad&\cdot\prod_{p=1}^r\,\left[
\prod_{i=1}^{p-1}\,\frac{\sinh(\epsilon\,\tilde{\eta}_i-\epsilon\,a_p)}{\epsilon} \,\prod_{i=p}^{r-1}\,\frac{\sinh(\epsilon\,a_p-\epsilon\,\tilde{\eta}_i)}{\epsilon}
\right]^{m\,n_p/2-1}\,d\tilde{\eta}.\nonumber
\end{align} 

The term 
\begin{align*}
\lefteqn{\prod_{i<j}\,\left[\frac{\sinh(\epsilon\,a_i-\epsilon\,a_j)}{\epsilon}\right]^{1-m\,(n_i+n_j)/2}
\,\prod_{i<j}\,\frac{\sinh(\epsilon\,\tilde{\eta}_i-\epsilon\,\tilde{\eta}_j) }{\epsilon}
}
\\ \qquad&\cdot\prod_{p=1}^r\,\left[
\prod_{i=1}^{p-1}\,\frac{\sinh(\epsilon\,\tilde{\eta}_i-\epsilon\,a_p)}{\epsilon} \,\prod_{i=p}^{r-1}\,\frac{\sinh(\epsilon\,a_p-\epsilon\,\tilde{\eta}_i)}{\epsilon}
	\right]^{m\,n_p/2-1}
\end{align*}
converges to $T^{(m)}(\eta,X)\,d_0(\eta)^m$ and is dominated by $M_X\,T^{(m)}(\eta,X)\,d_0(\eta)^m$, where $M_X$ is a constant which depends only on the compact set $E(X)$ and therefore on $X$
(this last statement is easily inferred from the inequalities $\frac{1}{2}\,e^u\,\frac{u}{1+u}\leq\sinh u\leq e^u\,\frac{u}{1+u}$ for $u\geq 0$).

Another application of \cite[Theorem 4.13]{DeJeu} gives us
$\lim_{\epsilon\to0}\,\phi^{(m)}_{\epsilon\,\lambda_0/\epsilon}(e^{\epsilon\tilde{\xi}})=\psi_{\lambda_0}(e^{\tilde{\xi}})$ uniformly on $E(H)$; 
the result then follows using the dominated convergence theorem.
\end{proof}

\begin{corollary}
We use the same definitions as in the theorem.  Then
\begin{align}
\psi_\lambda^{(m)}(e^X)
&=\frac{\Gamma(m\,n/2)}{\prod_{i=1}^r\,\Gamma(m\,n_i/2)}
\,e^{i\,\lambda_n\,\sum_{k=1}^r\,n_k\,a_k}
\,\int_{\sigma_r}\,\psi^{(m)}_{\lambda_0}(e^{\xi})
\,\prod_{i=1}^n\gamma_i^{m\,n_i/2-1}\,d\gamma\label{Spher1}
\end{align}
where the $\xi_i$ are the roots of $q(x)=\prod_{j=1}^r\,(x-a_j)^{n_j-1}\,\sum_{i=1}^r
\,\gamma_i\,\prod_{j\not=i}\,(x-a_j)$.

More precisely, $\xi$ is as in (\ref{aa}) with $\xi_k=a_i$ whenever $N_{i-1}<k<N_i$ and $\xi_{N_i}=\eta_i
\in [a_i,a_{i+1}]$ are the roots of the polynomial $q_1(x)=\sum_{i=1}^r\,\gamma_i\,\prod_{j\not=i}\,(x-a_j)$.  
Equivalently,
\begin{align*}
\gamma_p&=\frac{\prod_{j=1}^{r-1}\,(\eta_j-a_p)}{\prod_{j\not=p}\,(a_j-a_p)},
~p=1,\dots,r.
\end{align*}
\end{corollary}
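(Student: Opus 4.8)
The plan is to obtain (\ref{Spher1}) from Theorem \ref{psidunkl} by the change of variables $\eta=(\eta_1,\dots,\eta_{r-1})\mapsto\gamma=(\gamma_1,\dots,\gamma_{r-1})$ (with $\gamma_r=1-\gamma_1-\dots-\gamma_{r-1}$) determined by the stated relations $\gamma_p=\prod_{j=1}^{r-1}(\eta_j-a_p)/\prod_{j\neq p}(a_j-a_p)$, applied to the $\eta$-integral (\ref{Spher0}). This reverses, in the flat (rational) setting recovered from the hyperbolic one by the formal substitution $e^{2\,t}\rightsquigarrow t$, the computation carried out in the proof of Theorem \ref{walls2}.

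First I would compute the Jacobian, the rational analogue of (\ref{J}). Since $\partial\gamma_p/\partial\eta_q=\prod_{i\neq q}(\eta_i-a_p)/\prod_{j\neq p}(a_j-a_p)$, pulling $\prod_{i=1}^{r-1}(\eta_i-a_p)$ out of row $p$ and evaluating the remaining Cauchy determinant $\bigl[(\eta_q-a_p)^{-1}\bigr]_{1\le p,q\le r-1}$ by means of \cite[page 202]{Weyl} gives, up to sign,
\begin{align*}
\frac{\partial(\gamma_1,\dots,\gamma_{r-1})}{\partial(\eta_1,\dots,\eta_{r-1})}=\frac{d_0(\eta)}{\prod_{i<j}(a_i-a_j)},
\end{align*}
which is also the $\epsilon\to0$ limit of (\ref{J}) after rescaling the $a_j$'s and $\eta_j$'s by $\epsilon$. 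Next I would expand $\prod_{p=1}^r\gamma_p^{m\,n_p/2-1}$, the rational analogue of (\ref{G}): splitting each $\prod_{j=1}^{r-1}(\eta_j-a_p)$ into the part with $j<p$, which produces $\prod_{i=1}^{p-1}(\eta_i-a_p)$, and the part with $j\ge p$, which produces $\prod_{i=p}^{r-1}(a_p-\eta_i)$ up to sign, recovers exactly the products occurring in $T^{(m)}(\eta,X)$, while the denominators $\prod_{j\neq p}(a_j-a_p)^{m\,n_p/2-1}$ together with the factor $\prod_{i<j}(a_i-a_j)$ from the Jacobian assemble into $\prod_{i<j}(a_i-a_j)^{1-m(n_i+n_j)/2}$. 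Matching exponents exactly as in the closing computation of the proof of Theorem \ref{walls2}, one verifies that $T^{(m)}(\eta,X)\,d_0(\eta)^m\,d\eta=\prod_{p=1}^r\gamma_p^{m\,n_p/2-1}\,d\gamma$ on $E(X)\cong\sigma_r$, whence (\ref{Spher0}) becomes (\ref{Spher1}). The remaining assertions — the form (\ref{aa}) of $\xi$, the identification of the $\eta_i$ as the roots of $q_1$, and the closed form for $\gamma_p$ — transcribe verbatim from Theorem \ref{walls} under $e^{2\,t}\rightsquigarrow t$.

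Alternatively, one could reach (\ref{Spher1}) by taking the rational limit $\psi^{(m)}_\lambda(e^X)=\lim_{\epsilon\to0}\phi^{(m)}_{\lambda/\epsilon}(e^{\epsilon X})$ as in the proof of Theorem \ref{psidunkl}, now applied to the $\gamma$-simplex identity (\ref{psi2}) rather than to (\ref{phispecial}): here the domain $\sigma_r$ and the weight $\prod_{p}\gamma_p^{m\,n_p/2-1}\,d\gamma$ are untouched by the scaling $a_j\mapsto\epsilon\,a_j$, so the only point to check is that the roots $e^{2\,\xi_i}$ of the rescaled polynomial satisfy $\xi_i/\epsilon\to\eta_i$, the $\eta_i$ being the roots of $q_1$; this is the continuity of the roots of $(2\epsilon)^{-(n-1)}\,q_\epsilon(e^{2\,\epsilon\,u})$, which converges to $\prod_{j=1}^r(u-a_j)^{n_j-1}\,q_1(u)$, together with the block bookkeeping of (\ref{aa}), and dominated convergence is then immediate since $\Re m>0$ makes the measure finite while the integrand converges uniformly on the fixed compact $\sigma_r$. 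In either route the substantive work is the combinatorial bookkeeping of the exponents of $(\eta_i-\eta_j)$ and $(a_i-a_j)$ — respectively the tracking of the roots — and I expect that bookkeeping, lengthy but routine, to be the only real difficulty; there is no conceptual obstacle once Theorem \ref{psidunkl} is in hand.
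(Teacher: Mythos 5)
Your first route is exactly the paper's proof: the paper establishes the corollary by performing the change of variables $\gamma_p=\prod_{j=1}^{r-1}(\eta_j-a_p)/\prod_{j\not=p}(a_j-a_p)$ in (\ref{Spher0}), and your Jacobian and exponent bookkeeping (the rational analogues of (\ref{J}) and (\ref{G})) correctly verify that $T^{(m)}(\eta,X)\,d_0(\eta)^m\,d\eta$ becomes $\prod_p\gamma_p^{m\,n_p/2-1}\,d\gamma$. The alternative route via rational limits applied to (\ref{psi2}) is a reasonable variant but is not needed; your main argument matches the paper.
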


\begin{proof}
It suffices to use the change of variables 
$\gamma_p=\frac{\prod_{j=1}^{r-1}\,(\eta_j-a_p)}{\prod_{j\not=p}\,(a_j-a_p)}$,  $p=1,\dots,r$ in (\ref{Spher0}).
\end{proof}

\begin{proposition}\label{Veh}
Let $f\mapsto Vf(X)=\int_{\a}\,f(H)\,d\mu_X(H)$ be the intertwining operator in the Dunkl setting as discussed in the Introduction and let $f\mapsto\dual{}f$ the dual Abel operator.
\begin{enumerate}
\item If $f$ is a Weyl-invariant smooth function then $Vf=\dual{}f$.
\item $\hbox{support}(\mu_X)$ is Weyl-invariant.
\item $\hbox{support}(K(\cdot,X))=\hbox{support}(\mu_X)\subset C(X)$
\end{enumerate}
\end{proposition}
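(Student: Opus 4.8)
The plan is to identify the Weyl-symmetrisation of $\mu_X$ by means of the Laplace representation, and then to use the recent result of Rejeb to pass from that symmetrisation to $\mu_X$ itself.

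\emph{Part (1).} Starting from $G^{(m)}(\lambda,\cdot)=V e^{\langle\lambda,\cdot\rangle}$, I would apply $V$ to the Weyl-invariant exponential $e_\lambda(H):=\sum_{w\in W}e^{\langle\lambda,wH\rangle}$: by linearity of $V$, the standard equivariance $G^{(m)}(w\lambda,wX)=G^{(m)}(\lambda,X)$ of the Dunkl kernel, and (\ref{FG}), one gets $V e_\lambda(X)=\sum_{w\in W}G^{(m)}(w\lambda,X)=|W|\,\psi^{(m)}_{\lambda'}(e^X)$, where $\lambda'$ is $\lambda$ after the standard dictionary (the $\rho^{(m)}$-shift and the factor $i$ in the exponent) relating $G^{(m)}$ to $\psi^{(m)}$. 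On the other side, the Laplace representation $\psi^{(m)}_\lambda(e^X)=\int_{\a}e^{i\langle\lambda,H\rangle}K(H,X)\,dH$ (the Dunkl counterpart of Theorem \ref{CX}) is Weyl-invariant in $\lambda$ because $\psi^{(m)}_\lambda$ is; injectivity of the Fourier transform then forces $K(\cdot,X)$ to be Weyl-invariant, so that $K(\cdot,X)\,dH$ is a Weyl-invariant measure supported in $C(X)$, and pairing it against $e_\lambda$ gives again $|W|\,\psi^{(m)}_{\lambda'}(e^X)$. Thus $\mu_X^{\mathrm{sym}}:=\frac1{|W|}\sum_{w\in W}w_*\mu_X$ and $K(\cdot,X)\,dH$ are two compactly supported Weyl-invariant measures (the first supported in $C(X)$ by R\"osler's bound $\mathrm{supp}\,\mu_X\subseteq C(X)$) whose integrals against all $e_\lambda$, $\lambda\in\a_{\C}$, agree; since the $e_\lambda$ are dense, in the supremum norm on the compact set $C(X)$, in the Weyl-invariant continuous functions (Stone--Weierstrass), it follows that $\mu_X^{\mathrm{sym}}=K(\cdot,X)\,dH$. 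Hence for a Weyl-invariant smooth $f$, $Vf(X)=\int f\,d\mu_X=\int f\,d\mu_X^{\mathrm{sym}}=\int f(H)\,K(H,X)\,dH=\mathcal{A}^*f(X)$.

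\emph{Part (2).} Here the essential new ingredient is the recent result of Rejeb, which I would simply invoke: the support of the representing measure $\mu_X$ of the Dunkl intertwining operator is Weyl-invariant. I note that the equivariance $\mu_{wX}=w_*\mu_X$ (an immediate consequence of $G^{(m)}(w\lambda,wX)=G^{(m)}(\lambda,X)$) yields only $\mathrm{supp}\,\mu_{wX}=w\cdot\mathrm{supp}\,\mu_X$ and does \emph{not} by itself give Weyl-invariance of $\mathrm{supp}\,\mu_X$; this is precisely the gap that Rejeb's theorem closes.

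\emph{Part (3).} Granting (2), $\mathrm{supp}\,\mu_X^{\mathrm{sym}}=\bigcup_{w\in W}w\cdot\mathrm{supp}\,\mu_X=\mathrm{supp}\,\mu_X$ (a finite union of closed sets, with no cancellation since the $w_*\mu_X$ are positive measures); combined with $\mu_X^{\mathrm{sym}}=K(\cdot,X)\,dH$ from (1), this gives $\mathrm{supp}\,\mu_X=\mathrm{supp}\,K(\cdot,X)$, which is contained in $C(X)$ by the Dunkl counterpart of Theorem \ref{CX}, and which equals $C(X)$ when $m>0$ by the strict positivity of $K(\cdot,X)$ on $C(X)^\circ$. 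The main obstacle is step (2): parts (1) and (3) are essentially bookkeeping with Fourier transforms of compactly supported measures, but nothing in the defining properties of $V$, nor in the Laplace representation, lets one ``un-symmetrise'' --- i.e.\ recover $\mathrm{supp}\,\mu_X$ from $\mathrm{supp}\,\mu_X^{\mathrm{sym}}$ --- and it is Rejeb's theorem that supplies this. A minor but pervasive point of care is keeping straight the parameter dictionary among $G^{(m)}$, $\psi^{(m)}_\lambda$, the $\rho^{(m)}$-shift, and the factor $i$.
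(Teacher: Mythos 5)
Your proof is correct and rests on the same two pillars as the paper's: the observation that $V$ and $\dual$ agree on Weyl-symmetrized exponentials because both produce the generalized spherical function (via (\ref{FG})), and Rejeb's theorem for the Weyl-invariance of $\hbox{support}(\mu_X)$, which you rightly identify as the one step that equivariance alone cannot supply. The execution differs, however. For part (1) the paper expands a Weyl-invariant smooth $f$ through its Euclidean Fourier transform and checks that $Vf$ and $\dual f$ reduce to the same integral $\int\tilde f(\lambda)\,\psi_{-\lambda}\,d\lambda$; for part (3) it then runs a two-inclusion argument with Weyl-averaged bump functions (to get $\hbox{support}(K(\cdot,X))\subset\hbox{support}(\mu_X)$) and Weyl-averaged indicators of open sets (for the converse). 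You instead prove the sharper statement $\frac{1}{|W|}\sum_{w\in W}w_*\mu_X=K(\cdot,X)\,dH$ by testing both Weyl-invariant, compactly supported measures against the $e_\lambda$ and invoking Stone--Weierstrass on $C(X)$, after which (1) and (3) drop out at once; this is a cleaner packaging of part (3) and makes explicit a measure identity the paper leaves implicit. The trade-offs: your route needs the a priori bound $\hbox{support}(\mu_X)\subset C(X)$ of R\"osler--Voit in order to have a common compact set (the paper quotes that result only in a remark and instead obtains the inclusion as output, from $\hbox{support}(K(\cdot,X))\subset C(X)$), and it needs two small verifications you should spell out: that the span of the $e_\lambda$ is a conjugation-closed algebra separating Weyl orbits (use $e_\lambda\,e_\mu=\sum_{u\in W}e_{\lambda+u\mu}$), and that the Laplace representation of $\psi^{(m)}_\lambda$ extends to complex spectral parameter (analytic continuation, legitimate since $K(\cdot,X)$ is supported in the compact set $C(X)$). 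With those details added, your argument is complete and equivalent in strength to the paper's.
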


\begin{proof}
\begin{enumerate}
\item Suppose that $f$ is Weyl-invariant.  Then for $X\in\a$,
\begin{align*}
\dual{}f(X)
&=\dual\left(\int_{\a^*}\,\tilde{f}(\lambda)\,e^{-i\,\lambda(X)}\,d\lambda\right)
=\int_{\a^*}\,\tilde{f}(\lambda)\,\dual(e^{-i\,\lambda(X)})\,d\lambda\\
&=\int_{\a^*}\,\tilde{f}(\lambda)\,\psi_{-\lambda}(X)\,d\lambda
\end{align*}
while
\begin{align*}
Vf(X)
&=V\left(\int_{\a^*}\,\tilde{f}(\lambda)\,e^{-i\,\lambda(X)}\,d\lambda\right)
=V\left(\int_{\a^*}\,\tilde{f}(\lambda)\,\frac{1}{|W|}\,\sum_{\sigma\in W}\,e^{-i\,\lambda(\sigma^{-1}\cdot X)}\,d\lambda\right)\\
&=\int_{\a^*}\,\tilde{f}(\lambda)\,V\left(\frac{1}{|W|}\,\sum_{\sigma\in W}\,e^{-i\,\lambda(\sigma^{-1}\cdot X)}
\right)\,d\lambda
=\int_{\a^*}\,\tilde{f}(\lambda)\,\psi_{-\lambda}(X)\,d\lambda
\end{align*}
the last equality being a consequence of the relationship between the generalized spherical functions and the 
eigenfunctions of the Dunkl operators as given in (\ref{FG}) (refer to \cite{Opdam1} for example).
\item This result appears in the doctoral thesis of C.{} Rejeb \cite[Theorem 2.9]{Rejeb}.
\item $\hbox{support}(K(\cdot,X))\subset \hbox{support}(\mu_X)$.  Indeed, suppose that $H_0\not\in\hbox{support}(\mu_X)$.  Using 2.{}, this implies that $(W\cdot H_0)\cap\hbox{support}(
\mu_X)=\emptyset$. Let $\epsilon>0$ be such that $(W\cdot B(H_0,2\,\epsilon))\cap\hbox{support}(\mu_X)=\emptyset$ and let $f$ be a smooth non-negative function which is 
identically 1 on $\overline{B(H_0,\epsilon)}$ and 0 outside $B(H_0,2\,\epsilon)$. Let $f^W(Z)=\frac{1}{|W|}\,\sum_{
\sigma\in W}\,f(\sigma^{-1}\cdot Z)$.  Then for $X\in\a$,
\begin{align*}
0&=V(f^W)(X)=\dual{}f^W(X)=\dual{}f(X)
\end{align*}
which means that $H_0\not\in \hbox{support}(K(\cdot,X))$.
\end{enumerate}

On the other hand, if $\mu_X(U)>0$ with $U$ open, then
\begin{align*}
\dual(\un_U)(X)&=\dual\left(\frac{\sum_{w\in W}\, \un_U\circ w^{-1}}{|W|}\right)(X)
              =V    \left(\frac{\sum_{w\in W}\, \un_U\circ w^{-1}}{|W|}\right)(X)
							\\&
\geq \frac{V(\un_U)(X)}{|W|}>0
\end{align*}
which implies that $\hbox{support}(\mu_X)\subset \hbox{support}(K(\cdot,X))$.
\end{proof}

\begin{remark}
R\"osler and Voit have shown in \cite{Roesler0, Roesler2} that the support of the intertwining operator $V$ is always included in $C(X)$.  Furthermore, our result shows that the support of the Abel transform and of the intertwining operator are the same.
\end{remark}

We show below that the results for the Laplace-type representation of the generalized spherical functions associated to the root systems of type $A$ still hold in the Dunkl setting.  Furthermore,
we are able to describe precisely the support of the Dunkl intertwining operator $V$.

\begin{theorem}\label{CXfinal}
Assume $m>0$ and suppose $\psi_\lambda^{(m)}$ is the generalized spherical function for the root system $A_{n-1}$ in the Dunkl setting 
and, for $X\in\a$, $X\not=c\,I_n$ and $H\in C(X)$, let $D_H(X)=\{\xi\in E(X)\colon H'\in C(\xi)\}$.  Then 
\begin{align*}
\psi_\lambda^{(m)}(e^X)&=\int_{C(X)}\,e^{i\,\lambda(H)}\,\check{K}^{(m)}_n(H,X)\,dH
\end{align*}
where 
\begin{align}
\check{K}^{(m)}_n(H,X)&=
\frac{\Gamma(m\,n/2)}{\prod_{i=1}^r\,\Gamma(m\,n_i/2)}
\,\int_{D_H(X)} \,\check{K}^{(m)}_{n-1}(H',\xi)\,T^{(m)}(\eta,X)\label{Km2}
\\\nonumber&\qquad\cdot\,\prod_{i<j}\,(\eta_i-\eta_j)^m\,d\eta_1\cdots d\eta_{r-2}
\end{align}
is strictly positive and smooth on $C(X)^\circ$.  When $r=2$, $D_H(X)$ contains only one element  and (\ref{Km}) should be interpreted as 
\begin{align*}
\check{K}^{(m)}_n(H,X)&=
\frac{\Gamma(m)}{\prod_{i=1}^2\,\Gamma(m\,n_i/2)}
\check{K}^{(m)}_{n-1}(H',\xi)\,T^{(m)}(\eta,X).
\end{align*}
Furthermore, if $n=2$ then $K^{(m)}_{n-1}(H',\xi)$ should be replaced by 1.
\end{theorem}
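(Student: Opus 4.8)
The plan is to prove Theorem \ref{CXfinal} by induction on $n$, mirroring step for step the proof of Theorem \ref{CX} but with the trigonometric ingredients replaced by their flat counterparts: the recursion (\ref{Spher0}) of Theorem \ref{psidunkl} plays the role of (\ref{phispecial}), $T^{(m)}$ replaces $\tilde{S}^{(m)}$, the Vandermonde $\prod_{i<j}(\eta_i-\eta_j)^m$ replaces $\prod_{i<j}\sinh^m(\eta_i-\eta_j)$, and $\check{K}^{(m)}_{n-1}$ replaces $K^{(m)}_{n-1}$. (An essentially equivalent route, more in the spirit of Section \ref{Dunkl}, is to deduce the statement directly from Theorem \ref{CX} by taking rational limits: putting $H=\epsilon H'$ and $X\mapsto\epsilon X$ in (\ref{pos}) gives
\[
\phi^{(m)}_{\lambda/\epsilon}(e^{\epsilon X})=\int_{C(X)}\,e^{i\,\lambda(H')}\,\epsilon^{\,n-1}\,K^{(m)}_n(\epsilon H',\epsilon X)\,dH',
\]
whose left-hand side tends to $\psi^{(m)}_\lambda(e^X)$ uniformly on compacta by \cite[Theorem 4.13]{DeJeu}, so one need only check $\epsilon^{\,n-1}K^{(m)}_n(\epsilon H',\epsilon X)\to\check{K}^{(m)}_n(H',X)$ with an $X$-dependent dominating function; by induction this reduces to the $\epsilon$-power count already performed for $\tilde{S}^{(m)}\to T^{(m)}$ in the proof of Theorem \ref{psidunkl}, together with the identity $D_{\epsilon H'}(\epsilon X)=\epsilon\,D_{H'}(X)$.)

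I will carry out the direct induction. As in Theorem \ref{CX}, Weyl-invariance of $\psi^{(m)}_\lambda$ reduces us to $X\in\overline{\a^+}$, $X\neq c\,I_n$, and we induct on $n\geq 2$. For $n=2$ this forces $X\in\a^+$, and specializing (\ref{Spher0}) to $r=2$, $n_1=n_2=1$ (where $D_H(X)$ collapses to the single point $\xi$ with $\xi_1=h_1$) yields the flat analogue of (\ref{K2}), which is smooth and strictly positive exactly when $x_1>h_1>x_2$, i.e. on $C(X)^\circ$. For the inductive step, assume the result for $n-1$ with $n\geq 3$, and substitute the induction hypothesis $\psi^{(m)}_{\lambda_0}(e^\xi)=\int_{C(\xi)}e^{i\,\lambda_0(H')}\check{K}^{(m)}_{n-1}(H',\xi)\,dH'$ into (\ref{Spher0}). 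Absolute integrability of the resulting integrand (which is exactly what makes (\ref{Spher0}) converge) legitimizes Fubini, so one may interchange $\int_{E(X)}$ with $\int_{C(\xi)}$. Parametrizing $E(X)$ by $(\eta_1,\dots,\eta_{r-2})$ — the identity $\sum_k h_k=\sum_k\xi_k=\sum_i(n_i-1)a_i+\sum_k\eta_k$ fixes $\eta_{r-1}$ in terms of $H'$, and $h_n=\tr X-\sum_{k<n}h_k$ — and invoking Proposition \ref{CONV} together with Definition \ref{A0} (so that $\widehat{C_{n-1}(\xi)}$ is the slice $\{H\colon H'\in C_{n-1}(\xi),\ h_n=\tr X-\tr\xi\}$), the combined domain becomes precisely $\{(H,\eta_1,\dots,\eta_{r-2})\colon H\in C_n(X),\ \xi\in D_H(X)\}$. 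Collecting the terms produces (\ref{Km2}) with the stated $\Gamma$-factor, and the conventions for $r=2$ (no $\eta$-integration, $D_H(X)$ a single point) and $n=2$ ($\check{K}^{(m)}_{n-1}$ replaced by $1$) fall out automatically.

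It remains to show that $\check{K}^{(m)}_n(\cdot,X)$ is smooth and strictly positive on $C(X)^\circ$, and here the geometric input is identical to the trigonometric case, since Proposition \ref{CONV}, Proposition \ref{CONV2} and Remark \ref{dense} are proved independently of the trigonometric/flat distinction. For $H\in C(X)^\circ$ the set $D^\circ_H(X)=\{\xi\in E(X)^\circ\colon H'\in C(\xi)^\circ\}$ is, as in the proof of Theorem \ref{CX}, a nonempty relatively open subset of the affine space $E_H(X)=\{\xi\in E(X)\colon\tr\xi=\tr H'\}$ which is dense in $D_H(X)$, so the integral over $D_H(X)$ equals that over $D^\circ_H(X)$. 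On $D^\circ_H(X)$ one has $a_{i+1}<\eta_i<a_i$ for every $i$, hence each factor $\eta_i-a_p$ ($i<p$), $a_p-\eta_i$ ($i\geq p$) and $\eta_i-\eta_j$ is strictly positive and bounded away from $0$; thus $T^{(m)}(\eta,X)\prod_{i<j}(\eta_i-\eta_j)^m>0$, while $\check{K}^{(m)}_{n-1}(H',\xi)>0$ by the induction hypothesis (note $\xi$ is not a multiple of the identity, since $\eta_i\neq a_i$). So the integrand is smooth and strictly positive over a nonempty open set, giving $\check{K}^{(m)}_n(H,X)>0$; smoothness in $(H,X)$ follows from smoothness of the integrand and the smooth dependence of $D^\circ_H(X)$ on $(H,X)$, exactly as in Theorem \ref{CX}, and $\operatorname{support}\check{K}^{(m)}_n(\cdot,X)\subset C(X)$ because the whole construction takes place inside $C(X)$. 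The main difficulty is not conceptual but bookkeeping — matching the powers of the Jacobian and the $\Gamma$-normalizations so that (\ref{Km2}) drops out of (\ref{Spher0}) verbatim, and, if one prefers the rational-limit route, pinning down the $X$-dependent dominating function for the dominated-convergence argument, which is the flat analogue of the estimate $\tfrac12 e^u\tfrac{u}{1+u}\leq\sinh u\leq e^u\tfrac{u}{1+u}$ used in the proof of Theorem \ref{psidunkl}.
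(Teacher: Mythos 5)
Your proposal is correct and follows the same route as the paper: the paper's proof of Theorem \ref{CXfinal} consists precisely of observing that the argument of Theorem \ref{CX} carries over verbatim once (\ref{phispecial}), $\tilde{S}^{(m)}$ and $\prod_{i<j}\sinh^m(\eta_i-\eta_j)$ are replaced by (\ref{Spher0}), $T^{(m)}$ and $\prod_{i<j}(\eta_i-\eta_j)^m$, which is exactly the induction you carry out (the convex-geometric input from Propositions \ref{CONV}, \ref{CONV2} and Remark \ref{dense} being unchanged). Your sketched rational-limit alternative is a legitimate second route, but your main argument matches the paper's intended proof.
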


Remark \ref{Rm} also applies here.

\begin{proof}
The proof is practically identical to the one of Theorem \ref{CX}.
\end{proof}

Furthermore, we are able to specify precisely the support of the Dunkl intertwining operator $V$.
\begin{corollary}
The support of the intertwining operator $V$ is $C(X)$.
\end{corollary}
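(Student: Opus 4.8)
The plan is to combine Theorem~\ref{CXfinal} with Proposition~\ref{Veh}. By Proposition~\ref{Veh}(3), we have $\hbox{support}(\mu_X)=\hbox{support}(\check K^{(m)}_n(\cdot,X))$ in the group case, but more importantly the argument of Proposition~\ref{Veh}(1) and (3) is purely formal in the multiplicity: it only uses the expansion $\psi_{-\lambda}=\dual(e^{-i\langle\lambda,\cdot\rangle})$, the identity $Vf=\dual f$ on Weyl-invariant smooth $f$, and the positivity of $\mu_X$. Hence for arbitrary $m>0$ the same reasoning yields $\hbox{support}(\mu_X)=\hbox{support}(\check K^{(m)}_n(\cdot,X))$, where $\check K^{(m)}_n(\cdot,X)$ is the kernel produced in Theorem~\ref{CXfinal}.

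First I would dispose of the degenerate case $X=c\,I_n$: here $\psi^{(m)}_\lambda(e^X)=e^{i\langle\lambda,X\rangle}$ and, as noted in Remark~\ref{Rm}, the representing measure is $\delta_X$, so $\hbox{support}(\mu_X)=\{X\}=C(X)$. For $X\neq c\,I_n$, Theorem~\ref{CXfinal} gives
\[
\psi^{(m)}_\lambda(e^X)=\int_{C(X)}e^{i\langle\lambda,H\rangle}\,\check K^{(m)}_n(H,X)\,dH,
\]
with $\check K^{(m)}_n(\cdot,X)$ smooth and \emph{strictly positive} on the nonempty open set $C(X)^\circ$ and (by construction, being an integral of nonnegative quantities over $D_H(X)$) nonnegative and supported inside $C(X)$. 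Since $C(X)^\circ$ is dense in $C(X)$ and $C(X)$ is closed, a nonnegative function that is positive on $C(X)^\circ$ and vanishes off $C(X)$ has closed support exactly $C(X)$. Therefore $\hbox{support}(\check K^{(m)}_n(\cdot,X))=C(X)$, and by the preceding paragraph $\hbox{support}(\mu_X)=C(X)$, i.e.\ the support of $V$ at $X$ is $C(X)$.

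The only point requiring a little care — and the step I expect to be the mildest obstacle — is verifying that the equality $\hbox{support}(\mu_X)=\hbox{support}(K(\cdot,X))$ from Proposition~\ref{Veh}(3) genuinely transfers to the Dunkl, arbitrary-$m$ setting with $\check K^{(m)}_n$ in place of the geometric $K$. This hinges on the Weyl-invariance of $\hbox{support}(\mu_X)$ (Rejeb's theorem, Proposition~\ref{Veh}(2), which holds for all admissible multiplicities) and on the identity $Vf=\dual f$ for Weyl-invariant smooth $f$, whose proof in Proposition~\ref{Veh}(1) uses only \eqref{FG} and the definition of $\psi^{(m)}_\lambda$ — both valid for every $m>0$. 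Granting these, both inclusions $\hbox{support}(\check K^{(m)}_n(\cdot,X))\subseteq\hbox{support}(\mu_X)$ and $\hbox{support}(\mu_X)\subseteq\hbox{support}(\check K^{(m)}_n(\cdot,X))$ go through verbatim, using a bump function supported near a point outside $\hbox{support}(\mu_X)$ for the first and positivity of $\mu_X$ on open sets for the second. Combining with $\hbox{support}(\check K^{(m)}_n(\cdot,X))=C(X)$ completes the proof.
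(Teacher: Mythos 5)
Your argument is correct and follows essentially the same route as the paper, which proves the corollary precisely by combining Theorem~\ref{CXfinal} (the kernel $\check{K}^{(m)}_n(\cdot,X)$ is strictly positive on $C(X)^\circ$ and supported in $C(X)$) with Proposition~\ref{Veh}, part~3. The extra caution in your last paragraph is in fact unnecessary: Proposition~\ref{Veh} is already stated and proved in the Dunkl setting for general multiplicities, so the equality $\hbox{support}(\mu_X)=\hbox{support}(K(\cdot,X))$ applies directly without any transfer argument.
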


\begin{proof}
A consequence of the theorem and of Proposition \ref{Veh}, part 3.
\end{proof}

\section{Conclusion}

Our approach is heavily dependent on the iterative formulae of the spherical functions for the roots system $A_{n-1}$. 
It is not out of question that iterative formulae such as (\ref{Spherical0}), (\ref{psi}), (\ref{Spher0}) and (\ref{Spher1}) could be developed for the classical real and complex symmetric spaces.  These formulae could then potentially lead to  expressions for the generalized spherical functions associated to the corresponding root systems.
Not only was this approach used to derive the original formulae (\ref{Spherical0}) and (\ref{psi}) but the same principle was used to prove a sharp criterion for the existence of the product formula for different root systems 
(see for instance \cite{PGPS1,PGPS2,PGPS3}).

\end{document}